\newlength{\dhatheight}
\newcommand{\doublehat}[1]{%
    \settoheight{\dhatheight}{\ensuremath{\hat{#1}}}%
    \addtolength{\dhatheight}{-0.15ex}%
    \widehat{\vphantom{\rule{-2pt}{\dhatheight}}%
    \smash{\widehat{#1}}}}
\newtheorem*{theorem*}{Theorem}
\newtheorem{theorem}{Theorem}[section]
\newtheorem{prop}[theorem]{Proposition}
\newtheorem{defi}[theorem]{Definition}
\newtheorem{lemma}[theorem]{Lemma}
\newtheorem{corollary}[theorem]{Corollary}
\newtheorem*{corollary*}{Corollary}
\newtheorem*{prop*}{Proposition}
\newcommand{\C}{\mathbb{C}}
\newcommand{\Z}{\mathbb{Z}}
\newcommand{\Q}{\mathbb{Q}}
\newcommand{\N}{\mathbb{N}}
\newcommand{\et}{\text{\'et}}
\newcommand{\spc}{\text{Spec}}
\newcommand{\ho}{\text{Hom}}
\newcommand{\CH}{\text{CH}}
\theoremstyle{remark}
\newtheorem{remark}[theorem]{Remark}
\newcommand{\info}{{
  \bigskip
  \footnotesize

  \textsc{ENS de Lyon, UMPA, UMR 5669, 46 all\'ee d'Italie, 69364 Lyon Cedex 07, France}\par\nopagebreak
  \textit{E-mail address}: \texttt{ivan.rosas\_soto@ens-lyon.fr}
  }}
  \subjclass[2010]{14C25, 14F20, 19E15}
\keywords{Algebraic cycles, \'etale motives, étale cohomology, motivic cohomology}
\author{Iv\'an Rosas-Soto}
\date{October 2024}
\title{Fourier transform for étale motivic cohomology}
\begin{document}

\maketitle
\begin{abstract}
In the present article, we study the integral aspects of the Fourier transform of an abelian variety $A$ over a field $k$, using étale motivic cohomology, following the ideas and theory given by Moonen, Polishchuk and later by Beckman and de Gaay Fortman. We prove that there exists a PD-structure over the positive degree part of the étale Chow ring $\CH^\et_{>0}(A)$ with respect to the Pontryagin product.
\end{abstract}

\section{Introduction}

Consider an abelian variety $A$ defined over a field $k$ and let $\CH^*(X)$ and $\CH_\et^*(X)$ be the Chow and étale Chow rings, respectively, with respect to the intersection product. In 2004, Ésnault gave a counterexample for the existence of a PD structure (after a question by Kahn) in $\CH^*(X)$ when $k=\mathbb{F}_q$ is a finite field, and this negative result also extends to étale motivic cohomology. Although this is the counterexample of Ésnault, there is still hope to get a PD-structure in other generalities, for example when the base field is algebraically closed. Later, in 2010, Moonen and Polishchuk proved the existence of a PD structure in the ideal $\CH_{>0}(X)\subset \CH_*(X)$ with respect to the Pontryagin product induced in the Chow groups by the multiplication in $A$, and even better, if the basis field is algebraically closed, then the augmented ideal $I=\CH_{>0}(X)\cup \CH_0(X)^{\deg=0}\subset \CH_*(X)$ has a PD-structure. 

For an abelian variety $A$ over a field $k$ and its dual variety $\widehat{A}$, the correspondence defined in $\CH^*(A\times \widehat{A})_\Q$ by the Poincaré bundle $\mathcal{P}_{A}$ induces an equivalence of their derived categories, Chow groups with rational coefficients, and cohomology groups, and it is called the Fourier transform of abelian varieties, denoted by $\mathcal{F}_A$. The existence of the Fourier transform has deep consequences in the motivic behavior of the abelian variety $A$, for example, Laumon in \cite{Lau} used it to simplify Deligne's proof of Weil's conjectures, or Shermenev, Deninger and Murre, in \cite{She} and \cite{DM} respectively, to obtain the Chow-Künneth decomposition of the motive associated to an abelian variety $A$. For the case where $k$ is a separably closed field, the Fourier transform preserves integral $\ell$-adic classes (or integral classes in Betti cohomology when $k=\C$); the question is whether this is the case for algebraic cycles. We should note that there is a connection between the intersection product of cycles in an abelian variety $A$ and the Pontryagin product, which is given by the Fourier transform, by exchanging one for the other as follows
\begin{align*}
    \mathcal{F}_A(x * y) = \mathcal{F}_A(x) \cdot \mathcal{F}_A(y), \quad \mathcal{F}_A(x \cdot y) = (-1)^{\text{dim}(A)}\mathcal{F}_A(x) * \mathcal{F}_A(y)
\end{align*}

In \cite{MP}, Moonen and Polishchuk studied the integral aspect of the  Fourier transform, asking whether it lifts to a morphism 
\begin{align*}
    \mathcal{F}_A: \CH^*(A) \to \CH^*(\widehat{A}).
\end{align*}
in a motivic sense, defining the integral Fourier transform up to an integral constant. However, the existence of an integral Fourier transform is false due to the counterexample given by Ésnault in \cite{esn}, if the field is algebraically closed, this question remains open. With the existence of an integral Fourier transform for étale motivic cohomology, we might be able to find explicit projectors for an integral Chow-Künneth decomposition of the étale motive associated to an abelian variety $A$.

In this paper we aim to answer this question, namely whether the existence of a Fourier transform is true, but for étale motivic cohomology over a field $k =\bar{k}$. This allows us to address the construction of projectors (for étale motivic cohomology groups) for the integral decomposition of the étale motive associated to an abelian variety $A$. With this goal in mind, we begin by proving the following result about the PD-structure in the ideal $\CH_{>0}^\et(M) \subset \CH_*^\et(M)$ associated with the étale motivic cohomology groups of a commutative monoid $M$:

\begin{corollary*}[{Corollary \ref{coro1}}]
Let $M$ be a commutative monoid with identity in the category of quasi-projective $k$-schemes, such that the product morphism $\mu : M\times M \to M$ is proper. Let $p_d:\text{Sym}^d(M)\to M$ be the morphism induced by the iterated multiplication map $M^d \to M$. Then the maps $\gamma_d^M: \CH_{>0}^\et(M) \to \CH_*^\et(M)$ defined by $x \mapsto (p_d)_* \gamma_d(x)$ define a PD-structure on the ideal $\CH_{>0}^\et(M)\subset \CH_*^\et(M)$.    
\end{corollary*}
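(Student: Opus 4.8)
The plan is to reduce the étale statement to the classical result of Moonen--Polishchuk for ordinary Chow groups, by exploiting the fact that the PD-structure axioms are expressed entirely in terms of push-forwards along the maps $p_d$ and the symmetric-product structure, both of which are available in the étale setting. So let me sketch the proof strategy.

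First I would recall the definition of a PD-structure (divided power structure) on an ideal $I$ in a commutative ring: a collection of maps $\gamma_d : I \to I$ for $d \geq 1$ (with $\gamma_0 = 1$, $\gamma_1 = \mathrm{id}$) satisfying the usual identities $\gamma_d(x)\gamma_e(x) = \binom{d+e}{d}\gamma_{d+e}(x)$, $\gamma_d(\lambda x) = \lambda^d \gamma_d(x)$, the additivity formula $\gamma_d(x+y) = \sum_{i+j=d}\gamma_i(x)\gamma_j(y)$, and the composition rule $\gamma_d(\gamma_e(x)) = \frac{(de)!}{d!(e!)^d}\gamma_{de}(x)$, where the product is the Pontryagin product. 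The candidate maps here are $\gamma_d^M(x) = (p_d)_* \gamma_d(x)$, where $\gamma_d$ on the right refers to the divided-power operation coming from the symmetric powers $\mathrm{Sym}^d(M)$, i.e. the class of the $d$-th symmetric power of a cycle, pushed forward along $p_d$. The key structural input I would want to have in hand is a comparison that expresses the Pontryagin product on $\CH^\et_*(M)$ in terms of $(\mu)_*$ and external products, together with the compatibility of $\mathrm{Sym}^d$ with these operations, exactly as in the classical case.

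The central step is to verify each PD-axiom for the $\gamma_d^M$. Here I would argue that every axiom is a consequence of a corresponding identity of morphisms between symmetric powers $\mathrm{Sym}^d(M)$, $\mathrm{Sym}^e(M)$ and $\mathrm{Sym}^{d+e}(M)$ (or $\mathrm{Sym}^{de}(M)$), together with properness so that the relevant push-forwards are defined and functorial. For instance, the multiplicativity $\gamma_d(x)*\gamma_e(x) = \binom{d+e}{d}\gamma_{d+e}(x)$ comes from the commutative diagram relating the concatenation map $\mathrm{Sym}^d(M)\times\mathrm{Sym}^e(M)\to\mathrm{Sym}^{d+e}(M)$ to the iterated multiplications $p_d,p_e,p_{d+e}$, and the additivity axiom comes from the decomposition of $\mathrm{Sym}^d(M)$ restricted to a disjoint pair. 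The crucial point is that all these diagrams live at the level of schemes and their induced maps on cycles, so the identities proved by Moonen--Polishchuk transfer verbatim once I know that étale motivic Chow groups $\CH^\et_*$ admit proper push-forward, flat pull-back, exterior products, and the symmetric-power (divided-power) operations satisfying the same formal calculus. I expect these functorial properties to be either established earlier in the paper or standard for the étale motivic formalism (Cisinski--Déglise style $h$-motives or $\mathbb{Z}$-coefficient étale motivic cohomology), so I would cite them and emphasize that the properness hypothesis on $\mu$ guarantees $p_d$ is proper and hence $(p_d)_*$ exists.

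The main obstacle I anticipate is \emph{not} the formal manipulation of axioms but rather ensuring that the divided-power operations $\gamma_d$ at the symmetric-power level are well-defined integrally in the étale setting and that the push-forward $(p_d)_*$ is compatible with them; in ordinary Chow theory one uses the cycle-theoretic construction of $\mathrm{Sym}^d$ of a cycle, and I must check that the étale cycle class groups carry the same construction. Concretely, I would first establish (or invoke) a natural transformation or comparison between the classical divided-power operations and their étale analogues compatible with the change-of-topology map $\CH_*(M)\to\CH^\et_*(M)$, which would let me deduce the étale identities from the already-proven classical ones. If such a comparison map is surjective or an isomorphism in the relevant degrees, the verification reduces immediately; otherwise I would need to reprove the symmetric-power calculus intrinsically in the étale theory. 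Assuming the stated $\CH^\et_*$ formalism behaves like a module over correspondences with the expected functoriality, the rest of the argument is a diagram-by-diagram translation of Moonen--Polishchuk, so I would present the proof as such a translation, flagging the compatibility of $\gamma_d$ with $(p_d)_*$ and with the étale realization as the one point that genuinely requires verification.
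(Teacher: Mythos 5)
Your proposal follows essentially the same route as the paper: there, too, the maps $\gamma_d^M(x)=(p_d)_*\gamma_d(x)$ are checked against the PD-axioms formally via the Pontryagin-product identities of Lemma \ref{lemmagamma}, and those identities are obtained exactly by the transfer you flag as the crux --- the Moonen--Polishchuk symmetric-power calculus \cite{MP} carried into the \'etale setting through the change-of-topology functor $\text{DM}(X,\Z)\to\text{DM}_\et(X,\Z)$ (resp.\ $\rho: X_\et \to X_{\text{Zar}}$) together with the compatibility of proper push-forwards from \cite[Proposition 11.2.5]{CD19}. The only cosmetic difference is that the paper proves the graded-monoid theorem first and obtains the stated corollary as its ungraded specialization, whereas you argue directly; the mathematical content is identical.
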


If we consider an abelian variety $A$ over a field $k=\bar{k}$, we have a PD-structure over the augmented ideal $I=\CH_{>0}^\et(X)\cup \CH_0^\et(X)^{\deg=0}\subset \CH_*^\et(X)$ as one has in the case of Chow groups:

\begin{corollary*}[{Corollary \ref{coro2}}]
Let $k = \bar{k}$ be a field and let $A$ be an abelian variety over $k$, then there exists a canonical PD-structure, with respect to the Pontryagin product, on the augmentation ideal in $\CH_*^\et(A)$, generated by $\CH_{>0}^\et(A)$ together with the $0$-cycles of degree 0.
\end{corollary*}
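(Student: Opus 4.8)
The plan is to obtain the statement by combining Corollary~\ref{coro1}, applied to $A$ itself, with a supplementary construction of divided powers on the degree-zero $0$-cycles. Since an abelian variety is a commutative group scheme, it is in particular a commutative monoid with identity in quasi-projective $k$-schemes, and as $A$ is projective the multiplication $\mu\colon A\times A\to A$ is proper. Corollary~\ref{coro1} therefore already provides a canonical PD-structure $\{\gamma_d\}$ on $\CH_{>0}^\et(A)$ for the Pontryagin product, via $\gamma_d(x)=(p_d)_*\gamma_d(x)$. I would then identify the augmentation ideal explicitly as
\begin{align*}
    I=\CH_{>0}^\et(A)\oplus\CH_0^\et(A)^{\deg=0},
\end{align*}
and check that this sum is already a Pontryagin ideal, so that it coincides with the ideal generated by $\CH_{>0}^\et(A)$ and the $0$-cycles of degree $0$. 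This uses only multiplicativity of the degree, $\deg(x*y)=\deg(x)\deg(y)$: a degree-zero $0$-cycle multiplied by any class of positive dimension lands in $\CH_{>0}^\et(A)$, and multiplied by a $0$-cycle stays a $0$-cycle of degree $0$.

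The remaining task is to extend the PD-structure from $\CH_{>0}^\et(A)$ to all of $I$, i.e.\ to define and control $\gamma_d$ on the complementary summand $N=\CH_0^\et(A)^{\deg=0}$. I would keep the same shape of formula, $\gamma_d(z)=(p_d)_*\gamma_d(z)$, now interpreting the inner divided-power class in $\CH_*^\et(\text{Sym}^d A)$ through the symmetric-power formalism and the group law. Here the degree-zero hypothesis is what makes the construction sensible: the basepoint class $[0]$ is the Pontryagin unit, which can belong to no PD-ideal, so one must pass to the augmentation ideal before any divided power of a $0$-cycle can exist. Because $k=\bar k$, the group $N$ is generated by the elementary differences $[a]-[0]$ with $a\in A(k)$, so it is enough to construct and verify $\gamma_d$ on these generators and then propagate by the sum and product axioms.

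The main obstacle, and the genuinely new content beyond Corollary~\ref{coro1}, is the integrality and the PD-axioms for $0$-cycle inputs. The argument behind Corollary~\ref{coro1} relies on $\dim V>0$: for a positive-dimensional subvariety the quotient map $A^d\to\text{Sym}^d A$ is generically $d!$-to-one along $V^{\times d}$, which is precisely what produces the divided (rather than the full) power after pushforward. For a $0$-cycle this mechanism degenerates, and one cannot simply take $(p_d)_*[\text{Sym}^d(\text{point})]$; the correct $\gamma_d([a]-[0])$ must instead be built from the group structure, and the crux is to prove that it lifts to an honest \emph{integral} class of $\CH_0^\et(A)$ satisfying $\gamma_1(z)^{*n}=n!\,\gamma_n(z)$ together with the composition rule. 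This is exactly where algebraic closedness enters: over $k=\bar k$ the relevant degree-zero $0$-cycle groups are divisible, which is what permits the required $d!$-divisibility of the Pontryagin powers, while the rigidity needed to make the lifts canonical comes from the torsion-controlled graded pieces of the Fourier decomposition. I expect this integrality-and-canonicity step to be the hardest part, and one must additionally confirm that divisibility and the symmetric-power operations behave as in the classical case after replacing Chow groups by étale Chow groups.

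Finally I would assemble the global PD-structure on $I$. Writing a class of $I$ as $x+z$ with $x\in\CH_{>0}^\et(A)$ and $z\in N$, the divided powers are forced by additivity to be $\gamma_n(x+z)=\sum_{i+j=n}\gamma_i(x)\,\gamma_j(z)$, and I would check that the only genuinely new axiom instances—those in which a $0$-cycle argument occurs—follow from the construction on $N$, the PD-structure on $\CH_{>0}^\et(A)$ from Corollary~\ref{coro1}, and the compatibility of the two along the splitting $I=\CH_{>0}^\et(A)\oplus N$, which is clean because the intersection of the two summands is zero. Canonicity of the resulting PD-structure is then inherited from that of $\{\gamma_d\}$ on $\CH_{>0}^\et(A)$ and of the symmetric-power construction on $N$, giving the asserted canonical PD-structure on the augmentation ideal.
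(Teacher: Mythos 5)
Your overall architecture---split the augmentation ideal as $I=\CH_{>0}^\et(A)\oplus \CH_0^\et(A)^{\deg=0}$, take the PD-structure on the first summand from Corollary \ref{coro1}, and glue along the zero intersection---is sound, but the step that carries all the content, namely the construction of divided powers on $N=\CH_0^\et(A)^{\deg=0}$, is never actually carried out, and the two mechanisms you gesture at do not suffice. Divisibility of $N$ only tells you that $([a]-[0])^{*d}$ admits \emph{some} division by $d!$; different choices differ by $d!$-torsion, and $N$ has abundant torsion (by Roitman-type results the torsion of $\CH_0$ of an abelian variety over $\bar k$ is the full torsion of $A(k)$), so divisibility alone produces neither a well-defined nor a canonical $\gamma_d$. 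Nor can one prescribe $\gamma_d$ freely on the generators $[a]-[0]$ and ``propagate'': a PD-structure must respect all relations in $N$, which is precisely what needs proof. Worse, your proposed source of canonicity---``torsion-controlled graded pieces of the Fourier decomposition''---is circular in the context of this paper: the integral étale Fourier transform is exactly what Theorem \ref{teoEquiv} (statement (8)) and Corollary \ref{coro} derive \emph{from} the existence of this PD-structure, so it cannot be used as an input to construct it.

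The idea you are missing, which is essentially the entirety of the paper's proof, is a comparison isomorphism in dimension zero: over $k=\bar k$ one has $\CH_0(A)\simeq \CH_0^L(A)$, i.e., the Lichtenbaum/étale and classical Chow groups of zero-cycles agree integrally. Consequently no new construction on the degree-zero summand is needed at all: the canonical PD-structure on the classical augmentation ideal, established by Moonen and Polishchuk in \cite[Corollary 1.8]{MP}---where the genuinely delicate divisibility and canonicity issues for $0$-cycles that you correctly identify as the hard part are actually resolved---transports verbatim to $\CH_0^\et(A)^{\deg=0}$, and one concludes as in loc.\ cit., combined with Corollary \ref{coro1} for the positive-dimensional part. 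If you insist on your from-scratch route, you would have to reproduce the Moonen--Polishchuk argument for the degree-zero ideal in the étale setting; given the comparison isomorphism, that reproduction is superfluous, and without it your proposal has a genuine gap at its central step.
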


This is the existence of a PD-structure of $\CH_\et(A)$, where $A$ is an abelian variety, and with respect to the Pontryagin product, as it was obtained in \cite{MP} for the case of classical Chow groups. 

Having obtained the analogues of the PD-structure for étale motivic cohomology, we proceed to study the integral properties of the Fourier transform for étale Chow groups. For this we will use the definition in the spirit of the definition given by Beckmann and de Gaay Fortman in \cite{BG}, which is a weaker version of the integral Fourier transform (in the motivic sense) given by Moonen and Polishchuk in \cite{MP}.

\begin{defi}
    Let $A$ be an abelian variety over $k$ and let $\mathcal{F}_\et: \CH_\et(A)\to \CH_\et(\widehat{A})$ be a group homomorphism. We call $\mathcal{F}_\et$ a weak integral étale Fourier transform if the following diagram commutes
\[
\begin{tikzcd}
    \CH_\et(A) \arrow{d}\arrow{r}{\mathcal{F}_\et} & \CH_\et(\widehat{A}) \arrow{d} \\
    \CH_\et(A)_\Q \arrow{r}{\mathcal{F}_A} & \CH_\et(\widehat{A})_\Q.
\end{tikzcd}
\]
We call a weak integral Fourier transform $\mathcal{F}_\et$ algebraic if it is induced by a cycle $\Gamma \in \CH_\et(A \times \widehat{A})$ that satisfies $\Gamma_\Q =\text{ch}(\mathcal{P}_A)$. 

\end{defi}

Following this notion of étale Fourier transform for étale motivic cohomology, one gets the following equivalence for the existence of a Fourier transform:

\begin{theorem*}[{Theorem \ref{teoEquiv}}]
 Let $A$ be an abelian variety over $k$ of dimension $g$. The following statements are equivalent:
 \begin{enumerate}
     \item The 1-cycle $\displaystyle \frac{c_1(\mathcal{P}_A)^{2g-1}}{(2g-1)!} \in \CH_\et(A \times \widehat{A})_\Q$ lifts to $\CH^{2g-1}_\et(A \times \widehat{A})$.
     \item The abelian variety $A$ admits an étale motivic weak integral Fourier transform.
     \item The abelian variety $A\times \widehat{A}$ admits an étale motivic weak integral Fourier transform.
 \end{enumerate}
 
 Suppose $A$ carries a symmetric ample line bundle, which induces a principal polarization $\lambda: A \xrightarrow{\sim} \widehat{A}$, then the previous statements are equivalent to the following
 \begin{enumerate}
     \item[(4)] The 2-cycle $\displaystyle \frac{c_1(\mathcal{P}_A)^{2g-2}}{(2g-2)!} \in \CH_\et(A \times \widehat{A})_\Q$ lifts to $\CH^{2g-2}_\et(A \times \widehat{A})$.
     \item[(5)] Denoting as $\Theta \in \CH_\et^1(A)_\Q$ to the symmetric ample class attached to $\lambda$, then the one cycle $\displaystyle \Gamma_\Theta= \frac{\Theta^{g-1}}{(g-1)!} \in \CH_\et(A)_\Q$ lifts to a one cycle in $\CH^{g-1}_\et(A)$.
     \item[(6)] The abelian variety $A$ admits a weak integral étale Fourier transform.
     \item[(7)] The Fourier transform $\mathcal{F}_A$ satisfies $\mathcal{F}_A(\CH_\et(A)_{tf})\subset \CH_\et(\widehat{A})_{tf}$.
     \item[(8)] There exists a PD-structure on the ideal $\CH_\et^{>0}(A)_{tf} \subset \CH_\et(A)_{\text{tf}}$.
 \end{enumerate}
\end{theorem*}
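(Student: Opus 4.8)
The plan is to organize the eight conditions around a single core equivalence --- ``$\mathcal{F}_A$ is integral $\iff$ the critical Chern--character component of the Poincaré bundle lifts'' --- and to obtain the remaining equivalences by soft functorial manipulations, adding in the polarized case explicit transfers between cohomological levels. Throughout I write $\ell = c_1(\mathcal{P}_A)\in \CH^1_\et(A\times\widehat A)$, so that $\text{ch}(\mathcal{P}_A)=e^{\ell}=\sum_{j=0}^{2g}\tfrac{\ell^{j}}{j!}$ and $\mathcal{F}_A(x)=p_{2,*}\bigl(p_1^{*}x\cdot e^{\ell}\bigr)$. First I would dispatch the formal equivalences. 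Condition (6) is literally (2). For (2)$\iff$(7), the vertical maps in the defining square factor through $\CH_\et(A)_{tf}\hookrightarrow\CH_\et(A)_\Q$, so a lift $\mathcal{F}_\et$ exists exactly when $\mathcal{F}_A$ carries $\CH_\et(A)_{tf}$ into $\CH_\et(\widehat A)_{tf}$; the only thing to verify is that the surjection $\CH_\et(\widehat A)\twoheadrightarrow\CH_\et(\widehat A)_{tf}$ splits, so that the induced map can be promoted to an honest homomorphism. For (2)$\iff$(3) I would use compatibility with exterior products, $\mathcal{F}_{A\times\widehat A}(x\boxtimes y)=\pm\,\mathcal{F}_A(x)\boxtimes\mathcal{F}_{\widehat A}(y)$, together with the fact that $\CH_\et(A)$ and $\CH_\et(\widehat A)$ are direct summands of $\CH_\et(A\times\widehat A)$ via exterior product with the class of a point, and that $\mathcal{F}_{\widehat A}\circ\mathcal{F}_A=(-1)^{g}[-1]^{*}$ relates the two factors.

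The heart is the equivalence (1)$\iff$(2). The guiding principle, following \cite{BG}, is that for an abelian variety the existence of a weak integral Fourier transform is governed by a single component of $\text{ch}(\mathcal{P}_A)$, namely $\tfrac{\ell^{2g-1}}{(2g-1)!}$. To pass from (1) to (2), I would show that once this component lifts, the remaining components of $e^{\ell}$ contribute only integral classes, so that $x\mapsto p_{2,*}(p_1^{*}x\cdot e^{\ell})$ preserves torsion-free classes: the extreme components ($j=0,2g$) are integral by the normalization of $\mathcal{P}_A$ (e.g. $\tfrac{\ell^{2g}}{(2g)!}=\pm[\text{pt}]$ up to a degree-$0$ correction), and the intermediate ones are constrained by the Fourier self-duality pairing $\tfrac{\ell^{j}}{j!}$ with $\tfrac{\ell^{2g-j}}{(2g-j)!}$. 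The reverse passage, extracting an integral lift of the correspondence class from a mere homomorphism-level lift, uses that on $A\times\widehat A$ the Fourier--Mukai description realizes the $\mathcal{F}_A$-equivariant data as genuine cycle classes, so that the homomorphism obstruction and the correspondence obstruction coincide. I expect this reduction-to-one-component statement to be the main obstacle, and the point at which the étale-motivic setting demands real work: one must check that the classical Fourier formalism (projection formula, the exchange of Pontryagin and intersection products, the Beauville decomposition, Riemann--Roch for $\mathcal{P}_A$) has integral étale-motivic analogues, and one must separate torsion from the torsion-free part carefully, since --- unlike in the $\ell$-adic realization --- the obstruction lives in étale Chow groups rather than merely in cohomology.

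Finally, under a principal polarization $\lambda\colon A\xrightarrow{\sim}\widehat A$ I would adjoin (4), (5), (8). Pulling back along $1\times\lambda$ identifies $\ell$ with $-(p_1^{*}\Theta+p_2^{*}\Theta-m^{*}\Theta)$ on $A\times A$, so that the level-$(2g-1)$ and level-$(2g-2)$ classes expand by the binomial theorem into sums of monomials in $\Theta$ whose only non-integral constituent restricts (along a slice $A\times\{0\}$ or via the diagonal) to the minimal class $\tfrac{\Theta^{g-1}}{(g-1)!}$; this yields (1)$\iff$(4)$\iff$(5) as explicit, if lengthy, computations in the spirit of \cite{BG}. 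The equivalence (5)$\iff$(8) then comes from the exchange formula $\mathcal{F}_A(x*y)=\mathcal{F}_A(x)\cdot\mathcal{F}_A(y)$: the Pontryagin PD-structure on $\CH^\et_{>0}(A)$ furnished by Corollary \ref{coro2} transfers, once $\mathcal{F}_A$ is integral, to a divided-power structure for the intersection product whose degree-$(g-1)$ divided power of $\Theta$ is precisely $\tfrac{\Theta^{g-1}}{(g-1)!}$, giving (7)$\Rightarrow$(8); conversely an intersection-product PD-structure on $\CH_\et^{>0}(A)_{tf}$ produces that class directly, giving (8)$\Rightarrow$(5). Chaining these with the core equivalences closes the loop $(1)\iff(2)=(6)\iff(3)\iff(7)$ and $(1)\iff(4)\iff(5)\iff(8)$.
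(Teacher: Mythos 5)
Your outline breaks down at the very step you call the heart, (1)$\Rightarrow$(2). You propose that once $\ell^{2g-1}/(2g-1)!$ lifts, ``the remaining components of $e^{\ell}$ contribute only integral classes,'' justified by normalization of the extreme components and a duality constraint on the intermediate ones. That is neither true nor the actual mechanism: the lifting of $\ell^{2g-2}/(2g-2)!$ is precisely condition (4), part of what is being proved, not an automatic consequence of a pairing. What powers (1)$\Rightarrow$(2) in the paper is the identity of \cite[Lemma 3.4]{BG}, $\text{ch}(\mathcal{P}_A)=(-1)^g E\left((-1)^g\mathcal{R}_A\right)$, where $E$ is the exponential for the \emph{Pontryagin} product on $A\times\widehat{A}$, not the intersection product: given an integral lift $Z$ of $\mathcal{R}_A$, the class $(-1)^g E\left((-1)^g Z\right)$ is integral because $Z^{*n}/n!=\gamma_n(Z)$ exists integrally by the PD-structure on $\left(\CH_{>0}^\et(A\times\widehat{A}),*\right)$ of Corollary \ref{coro1}. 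This is exactly why the first half of the paper establishes the PD-structure, and your proposal never invokes it at this point. The construction moreover produces an \emph{algebraic} weak integral transform, induced by the integral correspondence $(-1)^g E\left((-1)^g Z\right)$, and this algebraicity is what the remaining implications lean on.

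Two of your ``soft'' reductions are also genuine gaps. For (7)$\Rightarrow$(2) you need a splitting of $\CH_\et(\widehat{A})\twoheadrightarrow\CH_\et(\widehat{A})_{tf}$; no such splitting is available (the torsion subgroup of étale Chow groups is large and not injective as a $\Z$-module in general), and the paper never needs one: it closes the loop as (7)$\Rightarrow$(5)$\Rightarrow$(1)$\Rightarrow$(2), where the transform produced from (1) is given by a correspondence and is therefore automatically an honest homomorphism. For (2)$\Rightarrow$(3), exterior-product compatibility only determines a map on box-product classes, and $\CH_\et(A\times\widehat{A})$ is not spanned by these; the paper instead proves (1)$\Rightarrow$(3) by expanding $\mathcal{R}_{A\times\widehat{A}}=\left(\pi_{13}^*\ell+\pi_{24}^*\widehat{\ell}\,\right)^{4g-1}/(4g-1)!$, in which only the terms of bidegree $(2g-1,2g)$ and $(2g,2g-1)$ survive, yielding $\pi_{13}^*\mathcal{R}_A\cdot\pi_{24}^*[0]+\pi_{13}^*[0]\cdot\pi_{24}^*\mathcal{R}_{\widehat{A}}$, and then reapplies the exponential construction to $A\times\widehat{A}$; the converse (3)$\Rightarrow$(1) is the one-line evaluation $(-1)^g\mathcal{F}_{\widehat{A}\times A}(-\widehat{\ell})=\mathcal{R}_A$. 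Your sketch of (2)$\Rightarrow$(1) (``the homomorphism obstruction and the correspondence obstruction coincide'') is likewise asserted rather than proved. On the positive side, your polarized-case scheme does match the paper's --- (4)$\Rightarrow$(5) via $x\mapsto \text{pr}_{2*}(\text{pr}_1^*x\cdot s_A)$, (5)$\Rightarrow$(1) via the étale analogue of \cite[Lemma 3.5]{BG}, and (7)$\Rightarrow$(8) by transporting the Pontryagin PD-structure through $\mathcal{F}_A$ --- and your explicit (8)$\Rightarrow$(5), producing $\gamma_{g-1}(\Theta)=\Theta^{g-1}/(g-1)!$ in $\CH_\et(A)_{tf}$, is the closing step, present in \cite{BG}, that the paper's written proof actually leaves implicit.
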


With the previous result one gets a characterization for the existence of the Fourier transform over any field. If we assume that $k=\bar{k}$, the result is unconditional, getting the existence of a morphism of étale cycles with integral coefficients:

\begin{theorem*}[{Corollary \ref{coro}}]
Let $k=\bar{k}$ and let $A/k$ be an abelian variety, then there exists an integral algebraic étale Fourier transform.
\end{theorem*}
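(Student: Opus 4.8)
The plan is to derive the corollary from the equivalence of Theorem \ref{teoEquiv}, feeding in the PD-structure of Corollary \ref{coro2}; the only non-formal issue is that the PD-criterion (8) is tied to a principal polarization, which $A$ need not carry, so the first step is a reduction to the principally polarized case.

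First I would replace $A$ by $B := A \times \widehat{A}$. This product is canonically principally polarized — by the symmetrized Poincaré bundle, using $\widehat{\widehat{A}} = A$ — so statements (4)--(8) of Theorem \ref{teoEquiv} become available for $B$. Since statement (3) for $A$ is verbatim statement (2) for $B$, and (2) $\Leftrightarrow$ (3) holds, a weak integral étale Fourier transform for $B$ yields one for $A$. The task is therefore reduced to checking any one of (4)--(8) for $B$.

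Next I would verify (8) for $B$. As $k = \bar{k}$, Corollary \ref{coro2} provides a canonical PD-structure on the augmentation ideal of $\CH_*^\et(B)$ for the Pontryagin product; the divided powers descend to the torsion-free quotient $\CH_\et(B)_{tf}$, giving precisely the PD-structure on $\CH_\et^{>0}(B)_{tf}$ required by (8). The equivalences of the theorem then produce a weak integral étale Fourier transform for $B$, and the reduction transports it to $A$.

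The last and, I expect, hardest step is to upgrade the transform from weak to algebraic, namely to realize it by a cycle $\Gamma \in \CH_\et(A \times \widehat{A})$ with $\Gamma_\Q = \text{ch}(\mathcal{P}_A)$. The equivalences also yield (1) for $A$: the single 1-cycle $c_1(\mathcal{P}_A)^{2g-1}/(2g-1)!$ admits an integral étale lift. Writing $\text{ch}(\mathcal{P}_A) = \exp\bigl(c_1(\mathcal{P}_A)\bigr) = \sum_i c_1(\mathcal{P}_A)^i / i!$, the claim I must justify is that this one lift forces the whole Chern character to lift coherently into a single $\Gamma$. Here the divided-power operations are the engine producing the integral classes $c_1(\mathcal{P}_A)^i/i!$ in the remaining degrees — the polarization on $B$ identifies $\Theta_B$ with $c_1(\mathcal{P}_A)$, so that (5) specializes to (1) — and the delicate part is to show that the pieces glue to one class rationalizing to the full $\text{ch}(\mathcal{P}_A)$, not merely matching it graded piece by graded piece, and that this coherence survives on the torsion-free quotient. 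I expect this assembly and its compatibility check to carry essentially all of the content of the proof.
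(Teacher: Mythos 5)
Your reduction to $B=A\times\widehat{A}$ is fine (it is the content of the equivalence (2)$\Leftrightarrow$(3)), but the step where you ``verify (8) for $B$'' contains the fatal error, and it makes the whole argument circular. Corollary \ref{coro2} provides a PD-structure on the augmentation ideal of $\CH_*^\et(B)$ \emph{for the Pontryagin product}, i.e.\ on positive-\emph{dimension} cycles. Condition (8) of Theorem \ref{teoEquiv} asks for a PD-structure on $\CH^{>0}_\et(B)_{\text{tf}}$, i.e.\ on positive-\emph{codimension} cycles for the \emph{intersection} product: this is how it is used in the equivalences, where $\gamma_{g-1}$ applied to the integral class $\Theta\in\CH^1_\et$ must rationalize to $\Theta^{g-1}/(g-1)!$. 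The only bridge between the two structures is the Fourier transform itself, which exchanges $*$ and $\cdot$ --- and in the paper's proof of Theorem \ref{teoEquiv} the implication established is precisely $(7)\Longrightarrow(8)$: one first needs $\mathcal{F}_A$ to preserve torsion-free integral classes before the Pontryagin PD-structure can be transported. So you are assuming the conclusion. A sanity check: Corollary \ref{coro1} gives the Pontryagin PD-structure on $\CH^\et_{>0}$ over \emph{any} field, so if your reading of (8) were correct, every abelian variety over every field would admit a weak integral étale Fourier transform, which is not what the theorem asserts and would make the algebraically-closed hypothesis of Corollary \ref{coro} superfluous. (Note also that Pontryagin divided powers of $\Theta$ itself are useless: $\Theta*\Theta\in\CH_{2g-2}^\et(A)$ vanishes for dimension reasons when $g>2$; the class whose Pontryagin divided powers generate the minimal classes is the curve class $\Theta^{g-1}/(g-1)!$, whose integrality is exactly what must be proved.)

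The paper's actual proof takes a completely different route, which your proposal does not touch: it never enters Theorem \ref{teoEquiv} through condition (8). Instead, Corollary \ref{FourierTran} shows that over $k=\bar{k}$ the rationally algebraic class $c_1(\mathcal{P}_A)^{2g-1}/(2g-1)!$ lifts to $\CH^\et_1(A\times\widehat{A})_{\Z_\ell}$ for every $\ell\neq\text{char}(k)$, using the torsion-freeness of $H^{2i}_\et(X,\Z_\ell)/\text{im}(c^i_{\et,\ell})$ coming from the Rosenschon--Srinivas results on Lichtenbaum cohomology over algebraically closed fields; then Theorem \ref{teoAlg} glues these $\ell$-adic transforms for all $\ell$ into a genuine integral one by a torsion diagram chase (via $H^i_\et(A,\Q_\ell/\Z_\ell(j))\simeq H^{i+1}_{M,\et}(A,\Z(j))\{\ell\}$ and the $\ell$-divisibility of the kernel of the cycle class map). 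One smaller remark: your worry about ``gluing the graded pieces'' of $\text{ch}(\mathcal{P}_A)$ is misplaced --- once a one-cycle lift $Z$ of $c_1(\mathcal{P}_A)^{2g-1}/(2g-1)!$ exists, the single integral cycle $(-1)^g E((-1)^g Z)$ already rationalizes to the full Chern character, the divided Pontryagin powers in the exponential $E$ being integral by Corollary \ref{coro1} since $Z$ has dimension $1>0$. The genuine missing content in your proposal is the unconditional input: some valid proof of one of conditions (1)--(8), which in this paper is the $\ell$-adic lifting argument, not the PD-structure.
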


\section*{Acknowledgments}

The author thanks to Fr\'ed\'eric D\'eglise and Johannes Nagel for their suggestions, useful discussions and the time for reading this article.  This work is supported by the EIPHI Graduate School (contract ANR-17-EURE-0002) and the FEDER/EUR-EiPhi Project EITAG. We thankthe French ANR project “HQ-DIAG” (ANR-21-CE40-0015).

\section{Etale Fourier transform}

\subsection{PD-structure}

For the sake of completeness, we should say a few words about the definition of étale motivic cohomology for non-smooth schemes over a field $k$. Given a Noetherian scheme $S$, we consider two classes of morphisms over $S$, the classes of finite type $\mathcal{F}^{\text{ft}}$ and étale morphism $\text{Ét}$. There exists a $\mathcal{F}^{\text{ft}}$ premotivic category $\underline{\text{DM}}_h(S,\Z)$ which is generated by h-motives $M^S_h(X)$ with $X/S$ a scheme of finite type over $S$, and with this we obtain an adjunction
\begin{align*}
    \nu_\#: \text{DM}_h(S,\Z)\leftrightarrows \underline{\text{DM}}_h(S,\Z):\nu^*
\end{align*}
This forms an enriched category of h-motives in the sense of \cite[Definition 1.4.13]{CD19} and \cite[Example 5.3.33]{CD19}. Thus, for a $k$-scheme of finite type $X$ we define the following groups
\begin{align*}
    \CH_\et^n(X):=\ho_{\underline{\text{DM}}_h(k,\Z)}\left(M_h(X),\mathbf{1}_k(n)[2n]\right), \quad   \CH^\et_n(X):=\ho_{\underline{\text{DM}}_h(k,\Z)}\left(M_h(X)(n)[2n],\mathbf{1}_k\right).
\end{align*}
Since $\nu_\#$ and $\nu^*$ commute with $f^*$ whenever we consider any morphism $f:X\to Y$, then we have that the push-forward is well defined for étale Chow groups defined by the enriched category of h-motives. If $X$ is smooth and projective over $k$ of dimension $d$, then the previously defined version of étale motivic cohomology corresponds with 
\begin{align*}
    \CH_\et^n(X)&\simeq \ho_{\text{DM}_h(k,\Z)}\left(M_h(X),\mathbf{1}_k(n)[2n]\right)\\
    &\simeq \ho_{\text{DM}_\et(k,\Z)}\left(M_\et(X),\mathbf{1}_k(n)[2n]\right)\\
    &\simeq \CH_{d-n}^\et(X).
    \end{align*}

We also breafly recall the construction of Lichtenbaum cohomology groups, groups defined by the hypercohomology of the \'etale sheafification of Bloch's complex sheaf. These groups are characterised by Rosenschon and Srinivas in \cite{RS} using \'etale hypercoverings. In this context we consider $\text{Sm}_k$ as the category of smooth separated $k$-schemes over a field $k$. We denote by $z^n(X,\bullet)$ the cycle complex of abelian groups defined by Bloch 
\begin{align*}
   z^n(X,\bullet): \cdots \to z^n(X,i) \to \cdots \to z^n(X,1)\to z^n(X,0) \to 0 
\end{align*}
where the differentials are given by the alternating sum of the pullbacks of the face maps and their homology groups define the higher Chow groups $\text{CH}^n(X,m)=H_m(z^n(X,\bullet))$.

Since  $z^n(X,i)$ and the complex $z^n(X,\bullet)$ are covariant functorial for proper maps and contravariant functorial for flat morphisms between smooth $k$-schemes, see \cite[Proposition 1.3]{Blo10}, therefore for a topology $t \in \left\{\text{fppf}, \ \et, \ \text{Nis},  \ \text{Zar} \right\}$ we have a complex of $t$-presheaves
$z^n(-,\bullet):U \mapsto z^n(U,\bullet)$. In particular the presheaf $z^n(-,i):U \mapsto z^n(U,i)$ is a sheaf for $t \in \left\{\text{fppf}, \ \et, \ \text{Nis},  \ \text{Zar} \right\}$, see \cite[Lemma 3.1]{Ge04}, and then $z^n(-,\bullet)$ is a complex of sheaves for the small \'etale, Nisnevich and Zariski sites of $X$. We set the complex of $t$-sheaves 
\begin{align*}
Z_X(n)_t = \left(z^n(-,\bullet)_t\right)[-2n]
\end{align*}
where $R$ is an abelian group and for our purposes we just consider $t= \text{Zar}$ or $\et$ and then we compute the hypercohomology groups $\mathbb{H}^m_t(X,R_X(n)_t)$. We denote the Lichtenbaum cohomology groups as
\begin{align*}
H_M^m(X,\Z(n))= \mathbb{H}_\text{Zar}^m(X,\Z(n)), \quad H_L^m(X,\Z(n))=\mathbb{H}_\et^m(X,\Z(n))
\end{align*}
and in particular we set $\text{CH}_L^n(X):=H^{2n}_L(X,\Z(n))$. For smooth projective $k$-variety $X$ with $p=\text{char}(k)$ one has $\mathbb{H}_\et^m(X,\Z(n))[1/p]\simeq H_{M,\et}^m(X,\Z(n))[1/p] $ where $A[1/p]:=A \otimes_Z Z[1/p].$ For the proof we refer to \cite[Theorem 7.1.11]{CD16}.

Let $X$ be a quasi-projective scheme over a field $k$. For $d\geq 1$ we define the $d$th symmetric power $\text{Sym}^d(X)$ of $X$ (over $k$) as the quotient of $X^d$ by the natural actions of the symmetric group $\mathfrak{S}_d$ (this quotient always exists for a finite group, see \cite[II, \S, nº 6 ]{DG} and  \cite[Exp. V, Cor. 1.5]{GR}). This quotient is functorial in the sense that for a morphism $f: X\to Y$ between quasi-projective $k$-schemes we have that $\text{Sym}^d(f):\text{Sym}^d(X) \to \text{Sym}^d(Y)$.

\begin{lemma}[{\cite[Lemma 1.1]{MP}}]
    Let $X$ be a quasi-projective scheme over $k$.
    \begin{enumerate}
        \item The quotient morphism $q_{d,X}:X^d \to \text{Sym}^d(X)$ is again quasi-projective.
        \item Assume that $X$ is equidimensional of dimension $n>0$. Then $X^d$ and $\text{Sym}^d(X)$ are equidimensional of dimension $dn$, and there exists a dense open subset in $\text{Sym}^d(X)$ over which $q_{d,X}$ is étale of degree $d!$.
        \item Assume that $X$ is equi-dimensional and that there exist non-negatives integers $d_1,\ldots,d_r$ such that $d_1+\ldots + d_r=d$. Then the natural map
        \begin{align*}
            \alpha_{d_1,\ldots,d_r}:\text{Sym}^{d_1}(X)\times \ldots \times \text{Sym}^{d_r}(X)\to \text{Sym}^{d}(X)
        \end{align*}
        is finite, and there is a dense open subset in $\text{Sym}^{d}(X)$ over which it is étale of degree $\displaystyle \frac{d!}{d_1!\cdot \ldots \cdot d_r!}$. For $d$, $e\geq 1$, the natural map $\text{Sym}^d(\text{Sym}^e(X)) \to \text{Sym}^{de}(X)$ is finite, and there is a dense open subset in $\text{Sym}^{de}(X)$ over which it is étale of degree $\displaystyle \frac{(de)!}{d!(e!)^d}$.
    \end{enumerate}
\end{lemma}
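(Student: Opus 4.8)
The plan is to reduce all three parts to the single finite quotient morphism $q_{d,X}\colon X^d \to \text{Sym}^d(X) = X^d/\mathfrak{S}_d$ and to the comparison maps between quotients of $X^d$ by subgroups of $\mathfrak{S}_d$, using throughout that the quotient of a quasi-projective scheme by a finite group exists, is again quasi-projective, and that the quotient morphism is finite.

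For (1), I would first embed $X$ as a dense open in a projective $k$-scheme $\bar X$. Then $X^d$ is a $\mathfrak{S}_d$-invariant open of the projective scheme $\bar X^d$, and $\text{Sym}^d(\bar X) = \bar X^d/\mathfrak{S}_d$ is projective, by descending a $\mathfrak{S}_d$-invariant ample line bundle such as $\bigotimes_{\sigma\in\mathfrak{S}_d}\sigma^*(L\boxtimes\cdots\boxtimes L)$ for $L$ ample on $\bar X$. Since $q_{d,\bar X}$ is finite, hence closed, and $X^d$ is saturated (a union of orbits), its image is the open complement of the closed set $q_{d,\bar X}(\bar X^d\setminus X^d)$; this image, with its quotient scheme structure, is exactly $\text{Sym}^d(X)$, an open subscheme of a projective scheme. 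As $q_{d,X}$ is the restriction of the finite map $q_{d,\bar X}$, it is finite, in particular quasi-projective.

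For (2), the dimension statement is immediate: $X^d$ is equidimensional of dimension $dn$ since dimension is additive for products over a field, and $q_{d,X}$ is finite surjective, so $\text{Sym}^d(X)$ is equidimensional of dimension $dn$. For the étale locus I remove the big diagonal: let $U\subset X^d$ be the open locus of tuples with pairwise distinct coordinates. Each partial diagonal has dimension at most $(d-1)n < dn$, so $U$ is dense and $\mathfrak{S}_d$-invariant. On $U$ the action of $\mathfrak{S}_d$ is scheme-theoretically free, since for $\sigma\neq e$ the fixed subscheme $(X^d)^\sigma$ forces $x_i = x_{\sigma(i)}$ for some $i\neq\sigma(i)$ and is thus contained in the big diagonal, hence disjoint from $U$. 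Consequently $q_{d,X}|_U\colon U \to U/\mathfrak{S}_d$ is an $\mathfrak{S}_d$-torsor, in particular finite étale of degree $d! = |\mathfrak{S}_d|$, and $V = q_{d,X}(U)$, open in $\text{Sym}^d(X)$ by saturation, is the desired dense open.

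For (3), I would identify the source of $\alpha_{d_1,\dots,d_r}$ with $X^d/(\mathfrak{S}_{d_1}\times\cdots\times\mathfrak{S}_{d_r})$ and its target with $X^d/\mathfrak{S}_d$; as $\mathfrak{S}_{d_1}\times\cdots\times\mathfrak{S}_{d_r}$ is a subgroup of $\mathfrak{S}_d$, the map $\alpha$ is the natural comparison of quotients, finite because $X^d\to X^d/\mathfrak{S}_d$ is finite surjective and factors through it. Restricting to the distinct-coordinate locus and applying the torsor argument of (2) to both quotients, the generic degree of $\alpha$ is the index $[\mathfrak{S}_d : \mathfrak{S}_{d_1}\times\cdots\times\mathfrak{S}_{d_r}] = d!/(d_1!\cdots d_r!)$, and $\alpha$ is étale there. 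The iterated case is identical with $\mathfrak{S}_{de}$ in place of $\mathfrak{S}_d$ and the wreath product $\mathfrak{S}_e \wr \mathfrak{S}_d = \mathfrak{S}_e^{\,d}\rtimes\mathfrak{S}_d$, of order $(e!)^d\,d!$, as the relevant subgroup, whose index $(de)!/((e!)^d\,d!)$ gives the stated generic degree. The main point requiring care is not the degree bookkeeping but the étaleness claim: one must pass from set-theoretic to scheme-theoretic freeness of the action on a dense open and deduce the torsor (hence flat, unramified) structure of the quotient, which is why I locate the fixed subschemes of the nontrivial group elements inside the big diagonal and work on its complement.
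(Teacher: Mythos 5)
This lemma is imported verbatim from \cite[Lemma 1.1]{MP} and the paper gives no proof of it, so the only comparison available is with the cited source; your blind proof is correct and follows essentially the same standard route as that reference (finite group quotients of quasi-projective schemes, generic freeness off the big diagonal, and the identifications $\text{Sym}^{d_1}(X)\times\cdots\times\text{Sym}^{d_r}(X)=X^d/(\mathfrak{S}_{d_1}\times\cdots\times\mathfrak{S}_{d_r})$ and $\text{Sym}^d(\text{Sym}^e(X))=X^{de}/(\mathfrak{S}_e\wr\mathfrak{S}_d)$, with generic degrees given by subgroup indices via the torsor property on the distinct-coordinate locus). The single point you gloss is in part (1): the $\mathfrak{S}_d$-linearized ample bundle $\bigotimes_{\sigma}\sigma^*(L\boxtimes\cdots\boxtimes L)$ need not itself descend, since stabilizers can act nontrivially on fibers at fixed points, but a suitable power does (or one embeds the quotient using invariant sections of a high power), which is all your argument needs.
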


Let us consider a quasi-projective scheme $X$ over a field $k$. Consider the $d$-diagonal embedding $X\xrightarrow{\delta^d}X^d$ and the composite map $p_d: X\xrightarrow{\delta^d}X^d \xrightarrow{q_{d,X}}\text{Sym}^{d}(X)$. Since $\delta^d$ and $q_{d,X}$ are proper morphism, then we have a push-forward map
\begin{align*}
    (p_d)_*:\CH_m^\et(X) \to \CH_{dm}^\et(\text{Sym}^{d}(X))
\end{align*}
In the same way we define the Pontryagin product as
\begin{align*}
    \CH_*^\et(\text{Sym}^{d_1}(X))\times \CH_*^\et(\text{Sym}^{d_2}(X)) \to \CH_*^\et(\text{Sym}^{d_1+d_2}(X))
\end{align*}
using the formula $x*y:=(\alpha_{d_1,d_2})_*(x\times y)$. For a cycle $\xi=\sum_{j=1}^r n_j Z_j$ with $Z_j \in \CH_*^\et(X)$, we define 
\begin{align*}
\gamma_d(\xi):= \sum_{d_1+\ldots+d_r=d} n_1^{d_1}\cdots n_r^{d_r}\cdot \gamma_{d_1}(Z_1)* \ldots * \gamma_{d_r}(Z_r)
\end{align*}
For $d=0$ let us set for an element $a \in \CH_*^\et(X)$ the $\gamma_0(a)=[\spc(k)] \in \CH_0^\et(X)$, which is the unit element in $\CH_0^\et(X)$.

\begin{lemma}\label{lemmagamma}
    Let $X$ be a quasi-projective scheme over $k$, then:
    \begin{enumerate}
        \item  If $Z \in \CH_{>0}^\et(X)$ and $d_1,\ldots,d_t$ are non-negative integers with $d_1+\ldots+d_t=d$ then
        \begin{align*}
            \gamma_{d_1}(Z)*\cdots*\gamma_{d_t}(Z) = \frac{d!}{d_1!d_2!\cdots d_t!} \cdot \gamma_d(Z).
        \end{align*}
        \item Let $\xi_1,\ldots,\xi_t$ be cycles in $\CH_{>0}^\et(X)$, and let $\xi=\sum_{i=1}^t \xi_i$. Then
        \begin{align*}
            \gamma_d(\xi)= \sum_{d_1+\ldots+d_t=d}  \gamma_{d_1}(\xi_1)*\cdots*\gamma_{d_t}(\xi_t).
        \end{align*}
        \item If $i: V \hookrightarrow X$ is a closed immersion and let $\xi$ be a cycle in $\CH_m^\et(V)$, then
        \begin{align*}
            \gamma_d(i_*\xi)=\text{Sym}^d(i)_*(\gamma_d(\xi)) \in \CH_{dm}^\et(\text{Sym}^d(X))
        \end{align*}
        \item Let $V\subset X$ be a closed subscheme, equidimensional of positive dimension. Then 
        \begin{align*}
            \gamma_d([V]_\et)=[\text{Sym}^d(V)]_\et
        \end{align*}
        where we view $\text{Sym}^d(V)$ as a closed subscheme of $\text{Sym}^d(X)$ and $[\text{Sym}^d(V)]_\et$ represents the image of the algebraic cycle $[\text{Sym}^d(V)]\in \CH_{dm}(\text{Sym}^d(X))$ in $\CH_{dm}^\et(\text{Sym}^d(X))$  
    \end{enumerate}
\end{lemma}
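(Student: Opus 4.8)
The plan is to obtain all four identities as the étale counterparts of the corresponding statements for classical Chow groups in \cite{MP}, by transporting them along the comparison map $\mathrm{cl}_Y\colon\CH_*(Y)\to\CH_*^\et(Y)$ and exploiting that the Pontryagin product $x*y=(\alpha)_*(x\times y)$, the maps $(p_d)_*$, and the operation $\gamma_d$ are all assembled from proper push-forward and exterior products. The first thing I would record is that $\mathrm{cl}$ is natural for proper push-forward (along $\delta^d$, $q_{d,X}$, the finite maps $\alpha_{d_1,\dots,d_r}$, closed immersions $i$, and their symmetric powers $\text{Sym}^d(i)$) and for exterior products; in the h-motivic framework $\underline{\text{DM}}_h(k,\Z)$ these are precisely the functorialities already used in the excerpt to define push-forward on $\CH_*^\et$. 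Granting this, I would prove the statements in the order $(4)\Rightarrow(1)\Rightarrow(2),(3)$, since $(4)$ is the geometric input and the rest is formal.

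For $(4)$, the classical equality $\gamma_d([V])=[\text{Sym}^d(V)]$ in $\CH_{dm}(\text{Sym}^d(X))$ holds by the definition of $\gamma_d$ on prime cycles together with the functoriality of $\text{Sym}^d$; applying $\mathrm{cl}$ and its compatibility with the push-forwards involved yields $\gamma_d([V]_\et)=[\text{Sym}^d(V)]_\et$. For the prime-cycle case of $(1)$, I would restrict $\alpha_{d_1,\dots,d_t}$ to $\text{Sym}^{d_1}(V)\times\dots\times\text{Sym}^{d_t}(V)$: by \cite[Lemma 1.1(3)]{MP} this restriction is generically finite onto $\text{Sym}^d(V)$ of degree $\frac{d!}{d_1!\cdots d_t!}$, so pushing the fundamental class of the product forward multiplies $[\text{Sym}^d(V)]$ by that degree. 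Combined with $(4)$ this gives $\gamma_{d_1}(Z)*\cdots*\gamma_{d_t}(Z)=\frac{d!}{d_1!\cdots d_t!}\gamma_d(Z)$ for a prime $Z$, and the general case follows from the $t=2$ instance by induction after expanding the defining formula for $\gamma_d$.

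Statement $(2)$ is then the multinomial expansion of $\gamma_d\big(\sum_i\xi_i\big)$: writing each $\xi_i$ as a $\Z$-combination of prime classes and expanding the defining formula, the terms regroup, using the prime case of $(1)$ and the associativity and commutativity of $*$ (inherited from the symmetric, associative maps $\alpha$), into $\sum_{d_1+\dots+d_t=d}\gamma_{d_1}(\xi_1)*\cdots*\gamma_{d_t}(\xi_t)$. Statement $(3)$ reduces on prime cycles to the observation that $\text{Sym}^d(i)$ carries $\text{Sym}^d(W)$ formed inside $V$ to the same closed subscheme $\text{Sym}^d(W)$ formed inside $X$, so that $\text{Sym}^d(i)_*\gamma_d([W])=\gamma_d(i_*[W])$; one then extends by the definition of $\gamma_d$ and the naturality of $\mathrm{cl}$ for $i_*$ and $\text{Sym}^d(i)_*$.

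The main obstacle I expect is not the combinatorics but the structural verifications at the étale level. First, one must check that $\gamma_d$, defined a priori on algebraic cycles through symmetric powers, is well defined on the group $\CH_{>0}^\et(X)$ --- which may be strictly larger than the image of $\mathrm{cl}$ --- and that it agrees, on that image, with the push-forward of the classical $\gamma_d$; this is what lets the classical identities propagate. Second, the degree computation underlying $(1)$ a priori lives in $\CH_*$ and must remain valid after étale sheafification, i.e.\ the compatibility of $\mathrm{cl}$ with proper push-forward and exterior products has to be supplied by the six-functor formalism of $\underline{\text{DM}}_h(k,\Z)$. Once these compatibilities are secured, $(1)$--$(4)$ follow from their counterparts in \cite{MP} essentially verbatim.
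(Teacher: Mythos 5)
Your proposal takes essentially the same route as the paper's own proof: both deduce all four identities from their classical counterparts in \cite{MP} (the degree formula of \cite[Lemma 1.1.(iii)]{MP} for (1)--(2), naturality of $\text{Sym}^d(i)_*$ for (3), and $\gamma_d([V])=[\text{Sym}^d(V)]$ for (4)) by transporting them along the comparison functor $\text{DM}(X,\Z)\to\text{DM}_\et(X,\Z)$ and its compatibility with proper push-forwards and exterior products, which is exactly what the paper's citation of \cite[Proposition 11.2.5]{CD19} supplies. The well-definedness issue you flag --- that $\gamma_d$ is a priori defined through subvarieties while $\CH^\et_{>0}(X)$ may be strictly larger than the image of the comparison map --- is likewise left implicit in the paper's proof, so your write-up is, if anything, the more careful of the two.
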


\begin{proof}
(1) Is a direct consequence of \cite[Lemma 1.1.(iii)]{MP}, \cite[Proposition 11.2.5]{CD19} and the functor $\text{DM}(X,\Z) \to \text{DM}_\et(X,\Z)$. The same argument works for (2) and the compatibility of the comparison map.

(3) This again is obtained by the definition of Pontryagin product using push-forward. The push-forward via the map $\text{Sym}^d(i): \text{Sym}^d(V)\to  \text{Sym}^d(X)$ respects Pontryagin products.

(4) Is a direct consequence of \cite[Lemma 1.3.4]{MP} and the compatibility of the comparison map with proper push-forwards.    
\end{proof}

\begin{lemma}
    Let $f:X\to Y$ be a proper morphism of quasi-projective $k$-schemes. Then for all $x\in \CH_{>0}^\et(X)$ and all $d\geq 0$ one has 
    \begin{align*}
        \text{Sym}^d(f)_* (\gamma_d(x)) = \gamma_d(f_* x)
    \end{align*}
\end{lemma}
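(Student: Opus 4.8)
The plan is to mimic the proof of the corresponding statement for classical Chow groups in \cite{MP}, reducing to a single prime cycle by exploiting the exponential nature of the total divided power. Write $x=\sum_{j=1}^r n_j Z_j$ with the $Z_j$ distinct integral closed subschemes of positive dimension and $n_j\in\Z$. The case $d=0$ is trivial, since $\text{Sym}^0(f)=\mathrm{id}_{\spc(k)}$ and $\gamma_0$ is the unit class on both sides, so assume $d\geq 1$. The structural facts I would use are: (a) each proper push-forward $\text{Sym}^d(f)_*$ is additive; (b) the collection $\{\text{Sym}^d(f)_*\}_{d\geq 0}$ is multiplicative for the Pontryagin product, i.e.
\begin{align*}
\text{Sym}^{d}(f)_*\big(u_1*\cdots*u_r\big)=\text{Sym}^{d_1}(f)_*u_1*\cdots*\text{Sym}^{d_r}(f)_*u_r
\end{align*}
for $u_i\in\CH_*^\et(\text{Sym}^{d_i}(X))$ with $d_1+\cdots+d_r=d$; and (c) the identity of Lemma~\ref{lemmagamma}(2), which says that $\gamma_\bullet=\sum_d\gamma_d$ turns sums into $*$-products. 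Granting (a)--(c), both $\text{Sym}^d(f)_*\gamma_d(x)$ and $\gamma_d(f_*x)$ expand over the compositions $d_1+\cdots+d_r=d$ into the same sum $\sum \prod_j n_j^{d_j}\,\gamma_{d_1}(f_*Z_1)*\cdots*\gamma_{d_r}(f_*Z_r)$, using $\gamma_{e}(n\,\xi)=n^{e}\gamma_{e}(\xi)$, \emph{provided} the single-prime-cycle identity $\text{Sym}^{e}(f)_*\gamma_{e}(Z_j)=\gamma_{e}(f_*Z_j)$ holds for every $j$ and every $e\geq 0$. Thus the problem reduces to the two inputs: multiplicativity (b) and the prime-cycle case.

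For (b) I would generalize Lemma~\ref{lemmagamma}(3) from closed immersions to arbitrary proper $f$. The geometric input is the functoriality of the symmetrization maps, namely the commutativity of
\begin{align*}
\text{Sym}^d(f)\circ\alpha^X_{d_1,\ldots,d_r}=\alpha^Y_{d_1,\ldots,d_r}\circ\big(\text{Sym}^{d_1}(f)\times\cdots\times\text{Sym}^{d_r}(f)\big),
\end{align*}
which is immediate from the construction of $\text{Sym}^d$ as a quotient. Combining this with the functoriality $(g\circ h)_*=g_*h_*$ of proper push-forward and the compatibility of push-forward with external products (which, as for the other parts of Lemma~\ref{lemmagamma}, I would deduce from the corresponding statement in classical higher Chow groups via the comparison functor $\text{DM}(X,\Z)\to\text{DM}_\et(X,\Z)$ and the compatibility of the comparison map with proper push-forwards) yields (b).

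For the prime-cycle case, let $Z=Z_j$ be integral of dimension $m>0$ and put $W=f(Z)$, an integral closed subscheme of $Y$. By Lemma~\ref{lemmagamma}(4), $\gamma_d(Z)=[\text{Sym}^d(Z)]_\et$, and by functoriality $\text{Sym}^d(f)$ restricts to $\text{Sym}^d(f|_Z)\colon \text{Sym}^d(Z)\to \text{Sym}^d(W)$, so $\text{Sym}^d(f)_*\gamma_d(Z)$ is computed by push-forward along $\text{Sym}^d(f|_Z)$ followed by the inclusion $\text{Sym}^d(W)\hookrightarrow\text{Sym}^d(Y)$. If $\dim W<m$, then $\dim\text{Sym}^d(W)=d\dim W<dm=\dim\text{Sym}^d(Z)$, so the class vanishes, matching $\gamma_d(f_*Z)=\gamma_d(0)=0$. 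If $\dim W=m$, then $f|_Z$ is generically finite of degree $e=[k(Z):k(W)]$, and $\text{Sym}^d(f|_Z)$ is generically finite of degree $e^d$: comparing generic degrees in the two factorizations of $Z^d\to\text{Sym}^d(W)$, namely $Z^d\to W^d\to\text{Sym}^d(W)$ and $Z^d\to\text{Sym}^d(Z)\to\text{Sym}^d(W)$, and using that $q_{d,Z}$ and $q_{d,W}$ are generically étale of degree $d!$ while $Z^d\to W^d$ has degree $e^d$, cancels the $d!$ and leaves $\deg\text{Sym}^d(f|_Z)=e^d$. Therefore $\text{Sym}^d(f)_*\gamma_d(Z)=e^d[\text{Sym}^d(W)]_\et=\gamma_d(e\,[W]_\et)=\gamma_d(f_*Z)$, again by Lemma~\ref{lemmagamma}(4).

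I expect the main obstacle to be input (b): transporting the multiplicativity of proper push-forward with respect to the Pontryagin product — and in particular the compatibility of push-forward with external products — into the enriched étale motivic setting. The cleanest route is to reduce each such identity to classical Chow groups through the comparison functor, exactly as the proof of Lemma~\ref{lemmagamma} does, so that the only genuinely new geometric computation is the generic degree $e^d$, which is elementary.
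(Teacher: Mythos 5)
Your argument is correct, but it takes a genuinely different (and much longer) route than the paper, whose entire proof is a transfer statement: the identity $\text{Sym}^d(f)_*(\gamma_d(x))=\gamma_d(f_*x)$ is precisely \cite[Proposition 1.5]{MP} for classical Chow groups, and since $\gamma_d$ on étale classes is defined through presentations $x=\sum_j n_j Z_j$ by genuine cycles, with base case $\gamma_d([V]_\et)=[\text{Sym}^d(V)]_\et$ (Lemma \ref{lemmagamma}(3)--(4)), the étale identity is just the image of the classical one under the change-of-topology map, which commutes with proper push-forward by \cite[Proposition 11.2.5]{CD19}. You instead re-prove the Moonen--Polishchuk proposition from scratch: reduction to prime cycles via the exponential property and the multiplicativity of $\text{Sym}^\bullet(f)_*$ for the Pontryagin product, then the generic-degree computation $\deg \text{Sym}^d(f|_Z)=e^d$ obtained by cancelling $d!$ between the two factorizations of $Z^d\to \text{Sym}^d(W)$ — all of which is correct, and is essentially the internal proof of \cite[Proposition 1.5]{MP}. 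The irony is that your own mechanism already concedes the shortcut: each compatibility in (a)--(c) is justified by reduction to classical Chow groups through the comparison functor, so having granted that the comparison map intertwines push-forward, Pontryagin products and $\gamma_d$, you could have transported the final identity wholesale and skipped the geometry entirely; what your version buys is self-containedness and an explicit record of where the degree $e^d$ enters, at the cost of duplicating \cite{MP}. One caveat applies equally to your proof and the paper's: the opening step ``write $x=\sum_j n_j Z_j$ with $Z_j$ integral closed subschemes'' is only available for classes in the image of $\CH_*(X)\to \CH_*^\et(X)$, and a general étale motivic class admits no such presentation; since the paper's definition of $\gamma_d$ is itself given only on such presentations, this is a limitation of the shared framework rather than a gap in your argument.
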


\begin{proof}
This follows from \cite[Proposition 1.5]{MP} and \cite[Proposition 11.2.5]{CD19}.   
\end{proof}

Let $k$ be a field and let $(M_n)_{n\in \N}$ be a commutative graded monoid in the category of quasi-projective schemes. Recalling such definition: $M_n$ is a quasi-projective $k$-scheme for all $n \geq 0$, and that we have product maps $\mu_{m,n}: M_m\times M_n \to M_{m+n}$ which satisfy commutativity and associativity. Assuming that there exists a $k$-rational point $e \in M_0(k)$ which is a unit for these products and that the maps $\mu_{n,m}$ are proper morphism, then we can define the Pontryagin product on the ring
\begin{align*}
    \CH_*^\et(M_\bullet):= \bigoplus_{n \in \N} \CH_*^\et(M_n)
\end{align*}
by the formula $x * y :=(\mu_{m,n})_*(x \times y)$ for $x \in \CH_*^\et(M_m)$ and $y \in \CH_*^\et(M_n)$. Something that we ought to notice is that the iteration of the multiplication map  $\mu_{n,\ldots,n}:M^d_n \to M_{dn}$ factors through the proper map $p_d: \text{Sym}^d(M_n) \to M_{dn}$. We set $p_0$ to be the map $e \to M_0$. 

\begin{theorem}
    For a commutative graded monoid $(M_n)_{n \in \N}$ with identity and with proper product morphisms, the maps
    \begin{align*}
        \gamma_d^M: \CH_{>0}^\et(M_n) \to \CH_*^\et(M_{dn})
    \end{align*}
    given by $x\mapsto (p_d)_* \gamma_d(x)$ extends uniquely to a PD-structure $\{\gamma_d^M\}_{d\geq 0}$ on the ideal $\CH_{>0}^\et(M_\bullet)\subset \CH_*^\et(M_\bullet)$. The PD-structure is functorial with respect to $(f_n:M_n \to N_n)$ which are proper for all $n \in \N$.
\end{theorem}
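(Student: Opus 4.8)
The plan is to verify, one axiom at a time, that $\{\gamma_d^M\}_{d\geq 0}$ is a divided power structure on the ideal $\CH_{>0}^\et(M_\bullet)\subset\CH_*^\et(M_\bullet)$ equipped with the Pontryagin product: the normalizations $\gamma_0^M(x)=[e]$, $\gamma_1^M(x)=x$, with $\gamma_d^M(x)$ lying in the ideal for $d\geq 1$; homogeneity $\gamma_d^M(nx)=n^d\gamma_d^M(x)$ for $n\in\Z$; additivity $\gamma_d^M(x+y)=\sum_{i+j=d}\gamma_i^M(x)*\gamma_j^M(y)$; the product rule $\gamma_i^M(x)*\gamma_j^M(x)=\binom{i+j}{i}\gamma_{i+j}^M(x)$; and the iteration rule $\gamma_d^M(\gamma_e^M(x))=\frac{(de)!}{d!(e!)^d}\gamma_{de}^M(x)$. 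Following \cite{MP}, the guiding idea is to first read off each identity for the operations $\gamma_d$ valued in $\CH_*^\et(\text{Sym}^d(M_n))$ from Lemma \ref{lemmagamma} and from the push-forward lemma $\text{Sym}^d(f)_*\gamma_d=\gamma_d\,f_*$, and then transport it to $M_\bullet$ along the proper maps $p_d$ using the projection formula and the factorization of iterated multiplication through $p_d$.

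The normalizations hold by construction: $\gamma_0(x)=[\text{Sym}^0(M_n)]=[\spc(k)]$ pushes forward under $p_0\colon\spc(k)\to M_0$ to the unit $[e]$, while $p_1=\text{id}$ and $\gamma_1=\text{id}$ give $\gamma_1^M=\text{id}$; and $\gamma_d^M(x)\in\CH_{>0}^\et(M_{dn})$ because $(p_d)_*$ preserves dimension and $\dim\gamma_d(x)=d\cdot\dim x>0$. Homogeneity is immediate from the cycle-level definition of $\gamma_d$. For additivity and the product rule I would invoke parts (2) and (1) of Lemma \ref{lemmagamma}, which supply the corresponding identities inside $\CH_*^\et(\text{Sym}^\bullet(M_n))$ for the Pontryagin product built from the maps $\alpha_{d_1,d_2}$; applying $(p_d)_*$ and using that $p_{d_1+d_2}\circ\alpha_{d_1,d_2}$ agrees with $\mu\circ(p_{d_1}\times p_{d_2})$, both being induced by iterated multiplication, converts the Pontryagin product on symmetric powers into the Pontryagin product on $M_\bullet$.

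The iteration axiom is the delicate point, and where I expect the main work. Writing $\beta\colon\text{Sym}^d(\text{Sym}^e(M_n))\to\text{Sym}^{de}(M_n)$ for the natural finite map, the key inputs are: its generic degree $\frac{(de)!}{d!(e!)^d}$ from \cite[Lemma 1.1]{MP}; the symmetric-power identity $\beta_*(\gamma_d(\gamma_e(x)))=\frac{(de)!}{d!(e!)^d}\gamma_{de}(x)$, which for a prime class $[V]_\et$ reduces via Lemma \ref{lemmagamma}(4) to $\beta_*[\text{Sym}^d\text{Sym}^e V]_\et=\frac{(de)!}{d!(e!)^d}[\text{Sym}^{de}V]_\et$ and extends by additivity; and the compatibility $p_d\circ\text{Sym}^d(p_e)=p_{de}\circ\beta$, both sides being induced by the $de$-fold multiplication $M_n^{de}\to M_{den}$. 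Combining these with the push-forward compatibility $\text{Sym}^d(p_e)_*\gamma_d=\gamma_d\,(p_e)_*$ yields
\begin{align*}
\gamma_d^M(\gamma_e^M(x)) &= (p_d)_*\,\gamma_d\big((p_e)_*\gamma_e(x)\big) = (p_d)_*\,\text{Sym}^d(p_e)_*\,\gamma_d(\gamma_e(x)) \\
&= (p_{de})_*\,\beta_*\,\gamma_d(\gamma_e(x)) = \tfrac{(de)!}{d!(e!)^d}(p_{de})_*\gamma_{de}(x) = \tfrac{(de)!}{d!(e!)^d}\gamma_{de}^M(x),
\end{align*}
which is the desired identity.

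The one genuinely étale complication, absent from \cite{MP}, is that $\CH_{>0}^\et(M_n)$ need not be spanned by classes of honest subvarieties, so the identities above, transparent on the generators $[V]_\et$, must be known on the whole group. This is precisely what the proofs of Lemma \ref{lemmagamma} and the push-forward lemma secure: through the functor $\text{DM}(X,\Z)\to\text{DM}_\et(X,\Z)$ and \cite[Proposition 11.2.5]{CD19}, the operations $\text{Sym}^d$, proper push-forward and the Pontryagin product are all compatible with the comparison map, so the polynomial identities defining the PD axioms hold for arbitrary étale classes. Uniqueness of the extension follows because the defining formula together with additivity pins down $\gamma_d^M$ on the whole ideal once it is fixed on generators. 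Finally, functoriality for a system of proper maps $(f_n\colon M_n\to N_n)$ is a direct consequence of $\text{Sym}^d(f_n)_*\gamma_d=\gamma_d\,(f_n)_*$ together with $p_d^N\circ\text{Sym}^d(f_n)=f_{dn}\circ p_d^M$. I expect the iteration axiom, and the careful tracking of the various symmetric-power maps, to be the only non-routine part.
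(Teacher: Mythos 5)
Your proposal is correct and takes essentially the same route as the paper: both transport the Moonen--Polishchuk argument to \'etale Chow groups using Lemma \ref{lemmagamma}, the push-forward compatibility $\text{Sym}^d(f)_*\gamma_d=\gamma_d f_*$, the generic-degree count of \cite[Lemma 1.1.(iii)]{MP} for $\text{Sym}^d(\text{Sym}^e(M_n))\to \text{Sym}^{de}(M_n)$, and the comparison-functor compatibility of \cite[Proposition 11.2.5]{CD19}. If anything, you supply more detail than the paper itself, which compresses the iteration axiom (your factorization $p_d\circ \text{Sym}^d(p_e)=p_{de}\circ\beta$ and the resulting chain of push-forwards) into a one-line citation of those same references.
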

\begin{proof}
    Let $x = \sum_{n \in \N} x_n$, $x_n \in \CH_{>0}^\et(M_\bullet)$ and $x_n$ is non-zero for finitely many $n$. Thus we define 
    \begin{align*}
        \gamma_d^M(x):=\sum_{d_1+d_2+\ldots=d} \gamma_{d_1}^M(x_1)*\gamma_{d_2}^M(x_2)*\ldots
    \end{align*}
Clearly by definition we get $\gamma_d^M(\lambda x)= \lambda^d \gamma_d^M(x)$, and by Lemma \ref{lemmagamma} we have
\begin{align*}
    \gamma_d^M(x+y) = \sum_{d_1+d_2=d} \gamma_{d_1}^M(x)*\gamma_{d_2}^M(y)
\end{align*}
for all $x$, $y \in \CH_{>0}^\et(M_\bullet)$ and for all $d\geq 0$. As this formula holds, for $x \in \CH_{>0}^\et(X)$ and  $d$, $e \geq 0$ we obtain the following relation
\begin{align*}
    \gamma_d^M(x)*\gamma_e^M(x) = \binom{d+e}{d}\cdot \gamma_{d+e}^M(x).
\end{align*}

For the other property 
\begin{align*}
    \gamma_d^M\left(\gamma_e^M(x)\right) = \frac{(de)!}{d!(e!)^d}\gamma_{de}^M(x)
\end{align*}
we apply \cite[Proposition 11.2.5]{CD19}, the functorial properties of the topology change $\rho: X_\et \to X_{\text{Zar}}$ and \cite[Lemma 1.1.(iii)]{MP}. 
\end{proof}

If $X$ is a smooth quasi-projective $k$-scheme then the previous construction gives a PD-structure on the graded ideal $\bigoplus_{i\geq 0}\CH_{>0}^\et(\text{Sym}^i(X))$. If $M_n = \emptyset$ for all $n>0$ (like for example an abelian variety and the multiplication on it) we obtain the following version (which is the ungraded version):

\begin{corollary}\label{coro1}
Let $M$ be a commutative monoid with identity in the category of quasi-projective $k$-schemes, such that the product morphism $\mu : M\times M \to M$ is proper. Let $p_d:\text{Sym}^d(M)\to M$ be the morphism induced by the iterated multiplication map $M^d \to M$. Then the maps $\gamma_d^M: \CH_{>0}^\et(M) \to \CH_*^\et(M)$ defined by $x \mapsto (p_d)_* \gamma_d(x)$ define a PD-structure on the ideal $\CH_{>0}^\et(M)\subset \CH_*^\et(M)$.    
\end{corollary}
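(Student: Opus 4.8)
The plan is to deduce the corollary from the preceding Theorem by exhibiting the single monoid $M$ as a commutative graded monoid concentrated in monoid-degree zero. Concretely, I would set $M_0 := M$ and $M_n := \emptyset$ for every $n>0$, so that the only nontrivial product map is $\mu_{0,0} = \mu : M\times M \to M$, which is proper by hypothesis, while every other $\mu_{m,n}$ with $m+n>0$ has empty source and is trivially proper. The $k$-rational identity $e \in M(k)$ serves as the unit $e\in M_0(k)$, and the associativity and commutativity constraints required of a graded monoid reduce exactly to those of $M$. Hence $(M_n)_{n\in\N}$ is a legitimate commutative graded monoid with proper products, so the Theorem applies to it.

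Next I would match the graded data of the Theorem with the ungraded data of the corollary. Since $\CH_*^\et(M_n)=0$ for all $n>0$, the graded Pontryagin ring collapses to $\CH_*^\et(M_\bullet)=\CH_*^\et(M_0)=\CH_*^\et(M)$, and under this identification the ideal $\CH_{>0}^\et(M_\bullet)$ is precisely $\CH_{>0}^\et(M)$ (here the subscript $>0$ is the dimension grading, which is untouched by the collapse of the monoid grading). A class $x\in\CH_{>0}^\et(M_0)$ has monoid-degree $0$, so $\gamma_d^M(x)$ lands in $\CH_*^\et(M_{d\cdot 0})=\CH_*^\et(M_0)=\CH_*^\et(M)$, and the factorization $p_d:\text{Sym}^d(M_0)\to M_0 = M$ is exactly the map $p_d$ of the corollary. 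Thus the operator $x\mapsto (p_d)_*\gamma_d(x)$ furnished by the Theorem agrees termwise with $\gamma_d^M$ as defined in the corollary.

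Finally I would invoke the Theorem directly: it produces a PD-structure $\{\gamma_d^M\}_{d\geq 0}$ on $\CH_{>0}^\et(M_\bullet)\subset \CH_*^\et(M_\bullet)$, and transporting along the identification above yields a PD-structure on $\CH_{>0}^\et(M)\subset \CH_*^\et(M)$ given by the asserted formula. I do not expect a genuine obstacle, since the corollary is a strict specialization; the only point deserving care is the bookkeeping observation that, because every class sits in monoid-degree zero, the defining sum $\sum_{d_1+d_2+\cdots=d}\gamma_{d_1}^M(x_1)*\gamma_{d_2}^M(x_2)*\cdots$ collapses to the single surviving term $(p_d)_*\gamma_d(x)$ (all other factors involving $\gamma_{d_i}^M(0)$ with $d_i>0$ vanish). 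Granting this, the axioms $\gamma_d^M(x)*\gamma_e^M(x)=\binom{d+e}{d}\gamma_{d+e}^M(x)$ and $\gamma_d^M(\gamma_e^M(x))=\tfrac{(de)!}{d!(e!)^d}\gamma_{de}^M(x)$, together with additivity and $\gamma_d^M(\lambda x)=\lambda^d\gamma_d^M(x)$, specialize verbatim from the graded case.
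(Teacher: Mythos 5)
Your proposal is correct and is precisely the paper's own route: the paper derives Corollary \ref{coro1} from the graded theorem by exactly this specialization, stating just before the corollary that taking $M_n=\emptyset$ for all $n>0$ (so $M_0=M$) yields the ungraded version. Your extra bookkeeping --- that every class is homogeneous of monoid-degree zero, so the extension formula collapses and $\gamma_d^M(x)=(p_d)_*\gamma_d(x)$ on the nose --- is the same observation the paper leaves implicit.
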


\begin{corollary}\label{coro2}
Let $k = \bar{k}$ be a field. Let $A$ be an abelian variety over $k$, then there is a canonical PD-structure, with respect to the Pontryagin product, on the augmentation ideal in $\CH_*^\et(A)$, generated by $\CH_{>0}^\et(A)$ together with the $0$-cycles of degree 0.
\end{corollary}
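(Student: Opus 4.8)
The plan is to promote the PD-structure on $\CH_{>0}^\et(A)$ furnished by Corollary \ref{coro1} to the full augmentation ideal, the only genuinely new input being the divided powers on $0$-cycles of degree $0$. Write $\epsilon\colon \CH_*^\et(A)\to \Z$ for the augmentation induced by the structural map $A\to \spc(k)$ (the degree on the $0$-cycle part, zero on $\CH_{>0}^\et(A)$); the Pontryagin unit is $[e]$, and since the Pontryagin product respects the dimension grading, the augmentation ideal splits as a direct sum of groups
\[
I=\ker(\epsilon)=\CH_{>0}^\et(A)\ \oplus\ \CH_0^\et(A)^{\deg=0}.
\]
On the first summand the operations $\gamma_d^A$ of Corollary \ref{coro1} already form a PD-structure. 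Because a PD-structure must satisfy $\gamma_d(x+y)=\sum_{i+j=d}\gamma_i(x)*\gamma_j(y)$, and every element of $I$ is uniquely a sum $u+v$ with $u\in\CH_{>0}^\et(A)$ and $v\in\CH_0^\et(A)^{\deg=0}$, it suffices to produce compatible divided powers on the second summand and then glue the two families by this cross-term formula; the mixed products $\gamma_i(u)*\gamma_j(v)$ again lie in $I$, and functoriality will be inherited from Corollary \ref{coro1}.

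Next I would construct the divided powers on $\CH_0^\et(A)^{\deg=0}$, and this is exactly the step that forces $k=\bar k$. Here the symmetric-power recipe $x\mapsto (p_d)_*\gamma_d(x)$ cannot be used verbatim: it is tailored to $\CH_{>0}^\et$, and for a $0$-cycle $x=[a]-[e]$ the candidate class assembled from $\text{Sym}^d(\{a\})$ (which unwinds to $\sum_{j=0}^d(-1)^{d-j}[ja]$) does \emph{not} satisfy $\gamma_1(x)*\gamma_1(x)=2\,\gamma_2(x)$, since $x*x=[2a]-2[a]+[e]$ is not twice that candidate. Instead I would follow Moonen--Polishchuk \cite{MP}: over an algebraically closed field every degree-$0$ zero cycle is a finite $\Z$-combination of differences $[a]-[e]$ with $a\in A(k)$, and the group $\CH_0^\et(A)^{\deg=0}$ over $\bar k$ is divisible (the étale counterpart of the classical fact they exploit, compatible with its description via Lichtenbaum cohomology). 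The divided powers are then pinned down by compatibility with the canonical rational structure $x\mapsto x^{*d}/d!$ on $\CH_0^\et(A)^{\deg=0}_\Q$, an integral representative being supplied by divisibility. All the ingredients involved --- the Pontryagin product, the maps $p_d$, and the symmetric-power operations --- commute with the comparison functor $\text{DM}(A,\Z)\to \text{DM}_\et(A,\Z)$ through \cite[Proposition 11.2.5]{CD19}, exactly as in Lemma \ref{lemmagamma}, so that Moonen--Polishchuk's verification of the axioms transports to the étale groups.

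The main obstacle is therefore the canonical construction of $\gamma_d$ on $\CH_0^\et(A)^{\deg=0}$: one must choose the integral divided powers consistently, the $d!$-th ``quotient'' being unique only up to torsion, and then check $\gamma_d(x)*\gamma_e(x)=\binom{d+e}{d}\gamma_{d+e}(x)$ and $\gamma_d(\gamma_e(x))=\frac{(de)!}{d!(e!)^d}\gamma_{de}(x)$ for these new elements, together with the mixed relations coupling them to $\CH_{>0}^\et(A)$. I expect this to be the heart of the argument, and it is precisely where algebraic closedness is indispensable: over a non-closed field the degree-$0$ cycles need neither be generated by rational points nor be divisible, and the extension to the augmentation ideal fails. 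Once the divided powers on both summands are in place and the cross-term formula is checked to assemble them into a single family $\{\gamma_d\}_{d\ge 0}$ on $I$ satisfying every PD-axiom (including the composition law, which follows from those on each summand and the grading), canonicity and functoriality descend from those of the two pieces, yielding the asserted PD-structure.
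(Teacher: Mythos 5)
Your overall strategy is the paper's own: the paper proves this corollary in two lines, observing that $\CH_0(A)\simeq\CH_0^L(A)$ over an algebraically closed field and then concluding \enquote{as in \cite[Corollary 1.8]{MP}}, and you likewise decompose the augmentation ideal as $\CH_{>0}^\et(A)\oplus\CH_0^\et(A)^{\deg=0}$, keep Corollary \ref{coro1} on the first summand, and port Moonen--Polishchuk to the second via the comparison with classical Chow groups. Your preliminary observations are also correct; in particular your explicit check that the symmetric-power recipe fails on $0$-cycles ($x*x=[2a]-2[a]+[e]$ is not twice the candidate $\sum_{j=0}^{2}(-1)^{2-j}[ja]$) is exactly the reason the $\gamma_d$ of Corollary \ref{coro1} are restricted to $\CH_{>0}^\et$. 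But there is a genuine gap at precisely the point you yourself flag as \enquote{the heart of the argument}: divisibility of $\CH_0^\et(A)^{\deg=0}$ together with compatibility with the rational structure $x\mapsto x^{*d}/d!$ does \emph{not} define $\gamma_d$ --- it determines it only up to $d!$-torsion, and this group has abundant torsion over $\bar{k}$ (by Roitman's theorem $A(k)_{\mathrm{tors}}$ injects into it), so an arbitrary choice of lift gives neither a canonical map nor any way to verify the PD axioms. You correctly identify the ambiguity but supply no mechanism to resolve it, and resolving it is the entire content of the step you defer.

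The missing idea, which is what makes \cite[Corollary 1.8]{MP} work and what the paper imports wholesale once it has the comparison isomorphism, is this: for $x\in I=\CH_0^\et(A)^{\deg=0}$ and $d\geq 2$, the cycle $x^{*d}$ lies in the Pontryagin square $I^{*2}$, and over $k=\bar{k}$ the ideal $I^{*2}$ --- the Albanese kernel --- is divisible \emph{and torsion-free} (Roitman), hence a $\Q$-vector space. Consequently $\gamma_d(x):=x^{*d}/d!$ is unambiguously defined inside $I^{*2}$ for all $d\geq 2$ (with $\gamma_0=[\spc(k)]$, $\gamma_1=\mathrm{id}$), canonicity is automatic rather than something to be arranged, and every PD axiom --- including the addition formula gluing this family to the $\gamma_d^A$ on $\CH_{>0}^\et(A)$, whose cross terms land either in $\CH_{>0}^\et(A)$ or in $I^{*2}$ --- follows from the corresponding identity in the $\Q$-vector space where it lives. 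So the role of algebraic closedness is not divisibility of $I$ itself, as your sketch suggests, but \emph{unique} divisibility of $I^{*2}$; and the only genuinely new input of the étale statement is the isomorphism $\CH_0^\et(A)\simeq\CH_0(A)$ (the Lichtenbaum comparison you mention only in passing), which transports Roitman's torsion-freeness to the étale group. Without this reduction your construction cannot get started, so the proposal as written does not close.
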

\begin{proof}
Let $I \subset \CH_0^\et(A)$ be the ideal of $0$-cycles of degree 0 on $A$. Since $\CH_0(A)\simeq \CH_0^L(A)$ over algebraically closed fields, then we conclude as in \cite[Corollary 1.8]{MP}.  
\end{proof}

\subsection{Etale Fourier transform}
Let $A$ be an abelian variety over a field $k$. The Fourier transform on the level of Chow groups is the groups homomorphism 
\begin{align*}
    \mathcal{F}_A: \CH(A)_\Q \to \CH(\widehat{A})_\Q
\end{align*}
induced by the correspondence $\text{ch}(\mathcal{P}_A)\in \CH(A\times \widehat{A})_\Q$, where $\text{ch}(\mathcal{P}_A)$ is the Chern character of $\mathcal{P}_A$. One has the Fourier transform on the level of étale cohomology:
\begin{align*}
    \mathfrak{F}_A: H^\bullet_\et(A_{k^s},\Q_\ell(\bullet)) \to H^\bullet_\et(\widehat{A}_{k^s},\Q_\ell(\bullet))
\end{align*}
which preserves integral cohomology classes and induces, for each $i$ with $0 \leq i \leq 2g$, an isomorphism
\begin{align*}
    \mathfrak{F}_A: H^i_\et(A_{k^s},\Z_\ell(n)) \to H^{2g-i}_\et(\widehat{A}_{k^s},\Z_\ell(n+g-i)),
\end{align*}
and if $k=\C$, then $\text{ch}(\mathcal{P}_A)$ induces, for each $0 \leq i \leq 2g$, an isomorphism of Hodge structures
\begin{align*}
    \mathfrak{F}_A: H^i(A,\Z) \to H^{2g-i}(\widehat{A},\Z(g-i)).
\end{align*}

\begin{defi}
    Let $A$ be an abelian variety over $k$ and let $\mathcal{F}_\et: \CH_\et(A)\to \CH_\et(\widehat{A})$ be a group homomorphism. We call $\mathcal{F}_\et$ a weak integral étale Fourier transform if the following diagram commutes
\[
\begin{tikzcd}
    \CH_\et(A) \arrow{d}\arrow{r}{\mathcal{F}_\et} & \CH_\et(\widehat{A}) \arrow{d} \\
    \CH_\et(A)_\Q \arrow{r}{\mathcal{F}_A} & \CH_\et(\widehat{A})_\Q.
\end{tikzcd}
\]
We call a weak integral Fourier transform $\mathcal{F}_\et$ algebraic if it is induced by a cycle $\Gamma \in \CH_\et(A \times \widehat{A})$ that satisfies $\Gamma_\Q =\text{ch}(\mathcal{P}_A)$. A group homomorphism  $\mathcal{F}_\et: \CH_\et(A)\to \CH_\et(\widehat{A})$ is an integral étale Fourier transform up to homology if the following diagram commutes:
\[
\begin{tikzcd}
    \CH_\et(A) \arrow{d}\arrow{r}{\mathcal{F}_\et} & \CH_\et(\widehat{A}) \arrow{d} \\
   \bigoplus_{i=0}^{2g} H^{2i}_\et(\widehat{A}_{k^s},\Z_\ell(i)) \arrow{r}{\mathfrak{F}_A} & \bigoplus_{i=0}^{2g} H^{2i}_\et(\widehat{A}_{k^s},\Z_\ell(i)).
\end{tikzcd}
\]
Finally an integral étale Fourier transform up to homology $\mathcal{F}_\et$ is called algebraic if it is induced by a cycle $\Gamma \in \CH_\et(A\times \widehat{A})$ such that $\text{cl}(\Gamma) = \text{ch}(\mathcal{P}_A) \in \bigoplus_{i=0}^{4g} H^{2i}_\et((A \times \widehat{A})_{k^s},\Z_\ell(i))$. Similarly, a $\Z_\ell$-module homomorphism $\mathcal{F}_{\et,\ell}:\CH_\et(A)_{\Z_\ell} \to \CH_\et(\widehat{A})_{\Z_\ell}$ is called an $\ell$-adic integral Fourier transform up to homology if $\mathcal{F}_{\et,\ell}$ is compatible with $\mathfrak{F}_A$ and the $\ell$-adic cycle class map. If such homomorphism exists and is induced by a cycle $\Gamma_\ell \in \CH_\et(A\times \widehat{A})_{\Z_\ell}$ and $\text{cl}(\Gamma_\ell)=\text{ch}(\mathcal{P}_A)$ is called an algebraic $\ell$-adic integral étale Fourier transform.
\end{defi}

If $\mathcal{F}_\et: \CH_\et(A)\to \CH_\et(\widehat{A})$ is a weak integral étale Fourier transform, then $\mathcal{F}_\et$ is an integral étale Fourier transform up to homology. If $k=\C$, then $\mathcal{F}_\et :\CH_\et(A)\to \CH_\et(\widehat{A})$ is an integral étale Fourier transform up to homology if and only if $\mathcal{F}_\et$ is compatible with the Fourier transform $\mathfrak{F}_A:H_B^\bullet (A,\Z) \to H_B^\bullet (\widehat{A},\Z)$ on Betti cohomology.

\begin{lemma} 
    Let $A$ be a complex abelian variety and let $\mathcal{F}_\et: \CH_\et(A)\to \CH_\et(\widehat{A})$ be an integral étale Fourier transform up to homology.
    \begin{enumerate}
        \item For each $i \in \N$ the integral étale Hodge conjecture for degree $2i$ classes on $A$ implies the integral étale Hodge conjecture for degree $2(g-i)$ classes on $\widehat{A}$.
        \item If $\mathcal{F}_\et$ is algebraic, then $\mathfrak{F}_A$ induces a group isomorphism $Z_\et^{2i}(A) \to Z_\et^{2(g-i)}(\widehat{A})$, where $Z_\et^{2i}(A)$ is the image of the Lichtenbaum cycle class map.
    \end{enumerate}
\end{lemma}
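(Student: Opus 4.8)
The plan is to reduce both statements to the single observation that the cohomological Fourier transform $\mathfrak{F}_A$ carries the image of the Lichtenbaum cycle class map on $A$ into that on $\widehat{A}$. Over $\C$ I would work with Betti cohomology, which is legitimate since, as recorded above, an integral étale Fourier transform up to homology is exactly an $\mathcal{F}_\et$ compatible with $\mathfrak{F}_A$ on $H_B^\bullet$. Writing $\mathrm{cl}\colon \CH_\et^j(-)\to H^{2j}(-,\Z)$ for the cycle class map and $Z_\et^{2j}(-)$ for its image, I first record the auxiliary fact $(\ast)$: $\mathfrak{F}_A\big(Z_\et^{2i}(A)\big)\subseteq Z_\et^{2(g-i)}(\widehat{A})$. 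This follows from the defining square: for $x\in\CH_\et^i(A)$ one has $\mathfrak{F}_A(\mathrm{cl}(x))=\mathrm{cl}(\mathcal{F}_\et(x))$, and since $\mathfrak{F}_A$ lands in $H^{2(g-i)}(\widehat{A})$ and the $H^{2j}$ are in direct sum, the right-hand side equals $\mathrm{cl}$ of the codimension-$(g-i)$ component of $\mathcal{F}_\et(x)$, hence lies in $Z_\et^{2(g-i)}(\widehat{A})$.

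For (1) I would combine $(\ast)$ with the fact that $\mathfrak{F}_A$ is an isomorphism of integral Hodge structures (stated above for $k=\C$), so it restricts to an isomorphism $\mathrm{Hdg}^{2i}(A,\Z)\xrightarrow{\sim}\mathrm{Hdg}^{2(g-i)}(\widehat{A},\Z)$ of groups of integral Hodge classes. Assuming the integral étale Hodge conjecture in degree $2i$ on $A$, i.e. $Z_\et^{2i}(A)=\mathrm{Hdg}^{2i}(A,\Z)$, I obtain $\mathrm{Hdg}^{2(g-i)}(\widehat{A},\Z)=\mathfrak{F}_A(\mathrm{Hdg}^{2i}(A,\Z))=\mathfrak{F}_A(Z_\et^{2i}(A))\subseteq Z_\et^{2(g-i)}(\widehat{A})$. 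Since the reverse inclusion $Z_\et^{2(g-i)}(\widehat{A})\subseteq\mathrm{Hdg}^{2(g-i)}(\widehat{A},\Z)$ always holds (cycle classes are Hodge classes), equality follows, which is exactly the integral étale Hodge conjecture in degree $2(g-i)$ on $\widehat{A}$. Note that (1) uses only the compatibility diagram, not the algebraicity of $\mathcal{F}_\et$.

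For (2) I would show that $\mathfrak{F}_A\colon Z_\et^{2i}(A)\to Z_\et^{2(g-i)}(\widehat{A})$ is bijective. Injectivity is immediate, as $\mathfrak{F}_A$ is already an isomorphism on all of $H^{2i}(A,\Z)$, and $(\ast)$ places its restriction in the target. For surjectivity I would use that an algebraic $\mathcal{F}_\et$, induced by $\Gamma$ with $\mathrm{cl}(\Gamma)=\mathrm{ch}(\mathcal{P}_A)$, yields by transport through the swap $s\colon A\times\widehat{A}\to\widehat{A}\times A$ an algebraic weak integral étale Fourier transform on $\widehat{A}$: the cycle $s^\ast\Gamma$ has cycle class $\mathrm{ch}(\mathcal{P}_{\widehat{A}})$ because $\mathcal{P}_{\widehat{A}}\cong s^\ast\mathcal{P}_A$ under $\widehat{\widehat{A}}=A$. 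Hence the analogue of $(\ast)$ gives $\mathfrak{F}_{\widehat{A}}\big(Z_\et^{2(g-i)}(\widehat{A})\big)\subseteq Z_\et^{2i}(A)$. Now the Fourier inversion formula $\mathfrak{F}_{\widehat{A}}\circ\mathfrak{F}_A=(-1)^g[-1]_A^\ast$ and its mirror show that the two composites restrict to $\pm[-1]^\ast$, which are automorphisms of the $Z_\et^\bullet$ by functoriality of $\mathrm{cl}$ under the involution $[-1]$; in particular $\mathfrak{F}_A\circ\mathfrak{F}_{\widehat{A}}$ is surjective onto $Z_\et^{2(g-i)}(\widehat{A})$, which forces $\mathfrak{F}_A$ itself to be surjective there. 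Together with injectivity this yields the desired isomorphism.

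The main obstacle is the surjectivity in (2): unlike (1), it genuinely requires an inverse, so the crux is (i) producing an algebraic Fourier transform on the dual $\widehat{A}$ out of the given algebraic $\mathcal{F}_\et$ and (ii) invoking Mukai's inversion formula while keeping track that $\mathfrak{F}_A$ does not preserve cohomological degree — so at every step one must isolate the single graded component that survives — and that $[-1]^\ast$ stabilises the Lichtenbaum subgroups $Z_\et^\bullet$. Everything else is a diagram chase once the isomorphism-of-Hodge-structures statement for $\mathfrak{F}_A$ is taken as given.
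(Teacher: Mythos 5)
Your proposal is correct and takes essentially the same route as the paper: the compatibility square yields the inclusion $\mathfrak{F}_A\bigl(Z^{2i}_\et(A)\bigr)\subseteq Z^{2(g-i)}_\et(\widehat{A})$, the fact that $\mathfrak{F}_A$ is an isomorphism of integral Hodge structures transfers the integral étale Hodge conjecture for part (1), and part (2) is obtained by running the same argument for $\widehat{A}$ and $\doublehat{A}$ --- a step the paper compresses into one sentence and which you usefully spell out via the swapped cycle $s^*\Gamma$ and Mukai's inversion formula. One terminological slip: the dual transform induced by $s^*\Gamma$ is an algebraic integral étale Fourier transform \emph{up to homology} (you only know $\mathrm{cl}(s^*\Gamma)=\mathrm{ch}(\mathcal{P}_{\widehat{A}})$ in cohomology, not $(s^*\Gamma)_\Q=\mathrm{ch}(\mathcal{P}_{\widehat{A}})$ in $\CH_\et(\widehat{A}\times A)_\Q$), so it should not be called a weak integral transform, but this is harmless since your argument uses only the up-to-homology property.
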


\begin{proof}
    Consider the following diagram
    \[
    \begin{tikzcd}
        \CH_\et^i(A) \arrow{r}\arrow{d}{c^i_\et}& \CH_\et(A) \arrow{r}{\mathcal{F}_\et}\arrow{d}& \CH_\et(\widehat{A}) \arrow{d} \arrow{r}& \CH^{g-i}_\et(\widehat{A}) \arrow{d}{c^{g-i}_\et} \\ 
        H^{2i}_B(A,\Z) \arrow{r}& H^\bullet_B(A,\Z) \arrow{r} & H^\bullet_B(\widehat{A},\Z) \arrow{r} & H^{2(g-i)}_B(\widehat{A},\Z)
    \end{tikzcd}
    \]
The composition of the bottom line $H^{2i}_B(A,\Z) \to H^{2(g-i)_B}(\widehat{A},\Z)$ is an isomorphism of Hodge structures, then  we obtain a commutative diagram
\[
\begin{tikzcd}
    \CH^i_\et(A) \arrow{r} \arrow{d}{c^i_\et} & \CH^{g-i}_\et(\widehat{A}) \arrow{d}{c^{g-i}_\et} \\
\text{Hdg}^{2i}(A,\Z) \arrow{r}{\simeq} & \text{Hdg}^{2(g-i)}(\widehat{A},\Z)
\end{tikzcd}
\]
Thus the surjectivity of $c^i_\et$ implies the surjectivity of $c^{g-i}_\et$. Arguing in the same way for $\widehat{A}$ and $\doublehat{A}$ we obtain the desired equivalence.
\end{proof}

For an abelian variety $A$ over $k$ we define the following cycles:
\begin{align*}
    \ell = c_1(\mathcal{P}_A) \in \CH^1_\et(A \times \widehat{A})_\Q, \quad  & \widehat{\ell} = c_1(\mathcal{P}_{\widehat{A}}) \in \CH^1_\et(\widehat{A} \times A)_\Q  \\
    \mathcal{R}_A = \frac{c_1(\mathcal{P}_A)^{2g-1}}{(2g-1)!} \in \CH^{2g-1}_\et(A \times \widehat{A})_\Q, \quad  &
   \mathcal{R}_{\widehat{A}} = \frac{c_1(\mathcal{P}_{\widehat{A}})^{2g-1}}{(2g-1)!} \in \CH^{2g-1}_\et(\widehat{A} \times A)_\Q.
\end{align*}
For $a \in \CH_\et(A)_\Q$ we define $E(a)\in \CH_\et(A)_\Q$ as the exponential element using $*$-operation:
\begin{align*}
    E(a):= \sum_{n \geq 0}\frac{a^{*n}}{n!} \in \CH_\et(A)_\Q.
\end{align*}

The following theorem is the same one as \cite[Theorem 3.8]{BG} but changing Chow groups to its étale analogue, 

\begin{theorem}\label{teoEquiv}
 Let $A$ be an abelian variety over $k$ of dimension $g$. The following statements are equivalent:
 \begin{enumerate}
     \item The one cycle $\displaystyle \frac{c_1(\mathcal{P}_A)^{2g-1}}{(2g-1)!} \in \CH_\et(A \times \widehat{A})_\Q$ lifts to $\CH^{2g-1}_\et(A \times \widehat{A})$.
     \item The abelian variety $A$ admits an étale motivic weak integral Fourier transform.
     \item The abelian variety $A\times \widehat{A}$ admits an étale motivic weak integral Fourier transform.
 \end{enumerate}
 
 If  we assume that $A$ carries a symmetric ample line bundle which induces a principal polarization $\lambda: A \xrightarrow{\sim} \widehat{A}$, therefore the previous statements are equivalent to the following
 \begin{enumerate}
     \item[(4)] The two cycle $\displaystyle \frac{c_1(\mathcal{P}_A)^{2g-2}}{(2g-2)!} \in \CH_\et(A \times \widehat{A})_\Q$ lifts to $\CH^{2g-2}_\et(A \times \widehat{A})$.
     \item[(5)] Denoting as $\Theta \in \CH_\et^1(A)_\Q$ to the symmetric ample class attached to $\lambda$, then the one cycle $\displaystyle \Gamma_\Theta= \frac{\Theta^{g-1}}{(g-1)!} \in \CH_\et(A)_\Q$ lifts to a one cycle in $\CH^{g-1}_\et(A)$.
     \item[(6)] The abelian variety $A$ admits a weak integral étale Fourier transform.
     \item[(7)] The Fourier transform $\mathcal{F}_A$ satisfies $\mathcal{F}_A(\CH_\et(A)_{tf})\subset \CH_\et(\widehat{A})_{tf}$.
     \item[(8)] There exists a PD-structure on the ideal $\CH_\et^{>0}(A)_{tf} \subset \CH_\et(A)_{\text{tf}}$.
 \end{enumerate}
\end{theorem}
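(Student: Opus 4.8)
The plan is to transcribe the proof of \cite[Theorem 3.8]{BG} into the étale setting, the point being that every cycle-theoretic operation used there is available for the groups $\CH_\et^\bullet(-)$ and obeys the same formal relations. Concretely, proper push-forward, lci pull-back, the intersection and Pontryagin products, Chern classes of line bundles, and the $\ell$-adic (resp.\ Betti) cycle class maps all exist on $\CH_\et$ and are compatible with the corresponding operations on ordinary Chow groups through the comparison functor $\text{DM}(X,\Z)\to\text{DM}_\et(X,\Z)$; this is exactly what was used in Lemma \ref{lemmagamma} and Corollary \ref{coro1}. Moreover, for smooth projective $X$ one has $\CH_\et^\bullet(X)_\Q\simeq\CH^\bullet(X)_\Q$, so the rational operator $\mathcal F_A$ on $\CH_\et(-)_\Q$ is literally the classical Fourier transform and satisfies the exchange formulas $\mathcal F_A(x*y)=\mathcal F_A(x)\cdot\mathcal F_A(y)$ and $\mathcal F_A(x\cdot y)=(-1)^g\,\mathcal F_A(x)*\mathcal F_A(y)$, together with the Mukai inversion $\mathcal F_{\widehat A}\circ\mathcal F_A=(-1)^g[-1]^*$. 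All new content therefore concerns the integral lattices $\CH_\et(-)_{tf}=\text{im}\big(\CH_\et(-)\to\CH_\et(-)_\Q\big)$ sitting inside these fixed rational vector spaces.

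First I would dispose of the formal equivalences. The implication (6)$\Leftrightarrow$(7) is tautological: a group homomorphism $\mathcal F_\et$ filling the square of the definition exists if and only if $\mathcal F_A$ carries the lattice $\CH_\et(A)_{tf}$ into $\CH_\et(\widehat A)_{tf}$, the value on torsion being irrelevant. For (1)$\Leftrightarrow$(2) I would write $\text{ch}(\mathcal P_A)=e^{\ell}=\sum_{n}\ell^n/n!$ with $\ell=c_1(\mathcal P_A)$ integral; if the single class $\mathcal R_A=\ell^{2g-1}/(2g-1)!$ lifts to $\CH^{2g-1}_\et(A\times\widehat A)$, then a bootstrapping argument—multiplying by the integral class $\ell$ and using the Mukai inversion to exchange the roles of $A$ and $\widehat A$ (via $\doublehat{A}=A$)—propagates integrality to all components of $e^{\ell}$, yielding an algebraic, hence weak, integral étale Fourier transform. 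Conversely the lattice-preservation forces $\mathcal R_A$ to lift, which is the more delicate direction. The equivalence (2)$\Leftrightarrow$(3) follows by applying (1)$\Leftrightarrow$(2) to the abelian variety $B=A\times\widehat A$, whose Poincaré bundle is built by pull-back from $\mathcal P_A$ and whose canonical principal polarization identifies the critical class of $B$ with one that lifts precisely when $\mathcal R_A$ does.

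Under the principal polarization $\lambda:A\xrightarrow{\sim}\widehat A$ I would obtain (1)$\Leftrightarrow$(4)$\Leftrightarrow$(5) from the Mumford identity $(\text{id}\times\lambda)^*\ell=m^*\Theta-p_1^*\Theta-p_2^*\Theta$ on $A\times A$, where $\Theta$ is the symmetric ample class attached to $\lambda$; since $\lambda$ is an isomorphism it preserves the integral lattices, so lifting $\mathcal R_A$ is equivalent to lifting $(\text{id}\times\lambda)^*\mathcal R_A$, and expanding the binomial and restricting to a fibre reduces this successively to the $2$-cycle $\ell^{2g-2}/(2g-2)!$ and to the minimal class $\Gamma_\Theta=\Theta^{g-1}/(g-1)!$, exactly as in \cite{MP}. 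Finally (7)$\Leftrightarrow$(8): by Corollary \ref{coro1} the ideal $\CH_{>0}^\et(\widehat A)$ carries a canonical PD-structure for the Pontryagin product, and the exchange formulas turn $\mathcal F_A$ into a transport between this Pontryagin PD-structure and divided powers for the intersection product on $\CH_\et^{>0}(A)$; thus $\mathcal F_A$ preserving the torsion-free lattice is equivalent to the existence of a PD-structure on $\CH_\et^{>0}(A)_{tf}$, the converse direction being read off through the lift of the minimal class in (5).

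The genuinely new work, and the step I expect to be the main obstacle, is not any single implication in isolation but the requirement that all of these integral manipulations descend to the torsion-free étale lattices: Lichtenbaum cohomology can carry torsion where ordinary Chow groups do not, so the classical arguments of \cite{MP,BG} must be reread modulo torsion. In particular the bootstrapping in (1)$\Leftrightarrow$(2) requires that multiplication by $\ell$ and the Mukai inversion be well defined and suitably injective on the quotients $\CH_\et(-)_{tf}$, and the PD-transport in (7)$\Leftrightarrow$(8) requires the divided-power maps $\gamma_d$ of Corollary \ref{coro1} to descend to $\CH_\et(-)_{tf}$; it is here that the comparison $\CH_0(A)\simeq\CH_0^L(A)$ over $\bar k$, used in Corollary \ref{coro2}, and the rational comparison elsewhere, are what allow the classical integral computations to be imported. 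I would therefore concentrate the technical effort on the single-obstruction reduction (1)$\Leftrightarrow$(2), since that is where the geometry of $\mathcal P_A$ enters and where the torsion-free bookkeeping is most delicate.
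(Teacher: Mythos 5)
Your global strategy (transcribe \cite[Theorem 3.8]{BG} to $\CH_\et$, using compatibility of push-forward, products and Chern classes with the comparison functor) is the paper's strategy, and several of your pieces match it: the decomposition of $\mathcal{P}_{A\times\widehat{A}}$ for (2)$\Leftrightarrow$(3), the reduction of (4) to the minimal class $\Gamma_\Theta$ via a correspondence, the transport of the Pontryagin PD-structure through the exchange formulas for (7)$\Rightarrow$(8), and your (8)$\Rightarrow$(5) via $\gamma_{g-1}(\Theta)=\Theta^{g-1}/(g-1)!$ (which the paper in fact leaves implicit). But the mechanism you propose for the central implication (1)$\Rightarrow$(2) is wrong, and it is exactly the step the paper's entire first section was built to handle. ``Bootstrapping by multiplying by the integral class $\ell$'' cannot work: intersection with $\ell$ only \emph{raises} codimension, so from an integral lift of $\mathcal{R}_A=\ell^{2g-1}/(2g-1)!$ you can at best reach $\ell^{2g}/(2g-1)!=2g\cdot\ell^{2g}/(2g)!$, never the lower components $\ell^{n}/n!$ of $\mathrm{ch}(\mathcal{P}_A)$ for $n<2g-1$; no injectivity of $\cdot\,\ell$ on $\CH_\et(-)_{tf}$ is available, nor would it help, and invoking Mukai inversion to ``exchange the roles of $A$ and $\widehat A$'' is circular, since exploiting it integrally presupposes the integral transform you are trying to build. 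The paper's actual argument is the \emph{Pontryagin} exponential: by \cite[Lemma 3.4]{BG} one has $(-1)^g\,E\bigl((-1)^g\,\mathcal{R}_A\bigr)=\mathrm{ch}(\mathcal{P}_A)$ with $E(a)=\sum_{n\geq 0}a^{*n}/n!$, and if $Z\in\CH_\et^{2g-1}(A\times\widehat A)$ lifts $\mathcal{R}_A$, then all divided Pontryagin powers $Z^{*n}/n!$ are integral because $Z$ has positive dimension and Corollary \ref{coro1} gives a PD-structure on $\bigl(\CH_{>0}^\et(A\times\widehat{A}),*\bigr)$; hence $\Gamma:=(-1)^g E\bigl((-1)^g Z\bigr)$ is an integral cycle with $\Gamma_\Q=\mathrm{ch}(\mathcal{P}_A)$, i.e.\ an algebraic weak integral étale Fourier transform. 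You cite Corollary \ref{coro1} but deploy it only for (7)$\Leftrightarrow$(8); its essential use is here.

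A secondary inaccuracy: (6)$\Leftrightarrow$(7) is not ``tautological'' in the direction (7)$\Rightarrow$(6). Lattice preservation gives a homomorphism $\CH_\et(A)\to\CH_\et(\widehat{A})_{tf}$, but to get an actual $\mathcal{F}_\et:\CH_\et(A)\to\CH_\et(\widehat{A})$ you must lift along the torsion quotient $\CH_\et(\widehat{A})\twoheadrightarrow\CH_\et(\widehat{A})_{tf}$, and since étale Chow groups can carry abundant torsion there is a genuine extension-theoretic obstruction; ``the value on torsion being irrelevant'' does not produce the lift. The paper sidesteps this by routing (7)$\Rightarrow$(5)$\Rightarrow$(1)$\Rightarrow$(2): from (7) applied to the integral class $\Theta=c_1(\mathcal{L})$ one gets that $\mathcal{F}_A(\Theta)=(-1)^{g-1}\Gamma_\Theta$ lifts, and then the exponential construction yields a transform induced by an integral correspondence (note (2) and (6) are verbatim the same statement here). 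With (1)$\Rightarrow$(2) repaired as above, your graph of implications closes and coincides in substance with the paper's chain $(4)\Rightarrow(5)\Rightarrow(1)\Rightarrow(2)\Rightarrow(4)$, $(2)\Rightarrow(6)\Rightarrow(7)\Rightarrow(5)$, and $(7)\Rightarrow(8)$.
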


\begin{proof}
    Assuming (1), then there exists a cycle $Z \in \CH_\et^{2g-1}(A \times \widehat{A})$ such that $Z_\Q \in \CH_\et^{2g-1}(A \times \widehat{A})_\Q$ equals $\displaystyle \frac{c_1(\mathcal{P}_A)^{2g-1}}{(2g-1)!}$. Consider the cycle $(-1)^g\cdot E((-1)^g\cdot Z) \in \CH_\et(A \times \widehat{A})$, by \cite[Lemma 3.4]{BG} we have that
    \begin{align*}
        (-1)^g\cdot E((-1)^g\cdot Z)_\Q =  (-1)^g\cdot E\left((-1)^g\cdot \frac{c_1(\mathcal{P}_A)^{2g-1}}{(2g-1)!}\right) = \text{ch}(\mathcal{P}_A) \in \CH_\et(A \times \widehat{A})_\Q  
    \end{align*}
    then follows (2). By the same principle, the line bundle $\mathcal{P}_{A \times \widehat{A}}$ on the abelian variety $X=A \times  \widehat{A} \times  \widehat{A} \times A$, we have that $\mathcal{P}_{A \times \widehat{A}}\simeq \pi^*_{13}\mathcal{P}_{A}\otimes \pi^*_{24}\mathcal{P}_{\widehat{A}}$,  then 
    \begin{align*}
        \mathcal{R}_{A\times \widehat{A}}&=\frac{(\pi^*_{13}c_1(\mathcal{P}_{A})+\pi^*_{24}c_1(\mathcal{P}_{\widehat{A}}))^{4g-1}}{(4g-1)!} \\
        &=\pi_{13}^*\left(\frac{c_1(\mathcal{P}_{A})^{2g-1}}{(2g-1)!} \right)\cdot \pi_{24}^*([0]_{A\times \widehat{A}})+\pi_{13}^*([0]_{\widehat{A}\times A})\cdot \pi_{24}^*\left(\frac{c_1(\mathcal{P}_{\widehat{A}})^{2g-1}}{(2g-1)!} \right)
    \end{align*}
therefore we conclude that $\mathcal{R}_{A\times \widehat{A}}$ lifts to $\CH^{4g-1}_\et(X)$, this implies that $A\times \widehat{A}$ admits a motivic weak integral Fourier transform. (3)$\Longrightarrow$(1) follows from the fact that $(-1)^g\mathcal{F}_{\widehat{A}\times A}(-\widehat{\ell})=\mathcal{R}_{A}$.

From now on, we assume that $A$ is a principally polarized variety $\lambda: A \to \widehat{A}$, with $\mathcal{L}$ be the symmetric ample line bundle. Assuming that (4) holds and denoting $s_A \in \CH_2^{\et}(A\times A)= \CH^{2g-2}_\et(A\times A)$ such that $(s_A)_{\Q}=\frac{c_1(\mathcal{P}_A)^{2g-2}}{(2g-2)!}$. Consider the symmetric line bundles $\CH^1_{\text{Sym}}(A) \subset \CH^1(A)$ and the homomorphism $\mathcal{F}:\CH^1_{\text{Sym}}(A)\to  \CH^\et_1(A)$ defined as the composition
\begin{align*}
    \CH^1_{\text{Sym}}(A) \hookrightarrow \CH^1(A) \xrightarrow{\text{pr}_1^*} \CH^1_\et(A\times A)\xrightarrow{\cdot s_A} \CH^{2g-2}_\et(A\times A)\xrightarrow{\text{pr}_{2*}} \CH_1^\et(A)
\end{align*}
As the line bundle $\mathcal{L}$ is symmetric, we have the following
\begin{align*}
    \Theta &= \frac{1}{2}\cdot (\text{id},\lambda)^* c_1(\mathcal{P}_A)\\
    &= \frac{1}{2}\cdot c_1((\text{id},\lambda)^*\mathcal{P}_A) \\
    &=\frac{1}{2}\cdot c_1(\mathcal{L}\otimes \mathcal{L}) =c_1(\mathcal{L}) \in \CH^1(A)_\Q
\end{align*}
The Chern class of $\mathcal{L}$ is sent to $\Theta$, therefore $\mathcal{F}(c_1(\mathcal{L}))_\Q = \Gamma_\Theta$, therefore (5) holds. If we assume that (5) holds, then by \cite[Lemma 3.5]{BG} we obtain (1).

If (2) holds then immediately holds (4), so we obtain that $(4)\Longrightarrow(5)\Longrightarrow(1)\Longrightarrow(2)\Longrightarrow(4)$. Under the assumptions about polarization, we see that $(2)\Longrightarrow(6)\Longrightarrow(7)$. If we assume that (7), since $\Theta = c_1(\mathcal{L})$ is lifted to $\CH^1(A)$, then $\mathcal{F}_A(\Theta)=(-1)^{g-1}\Gamma_\Theta$ is lifted to $\CH_1^\et(A)$, thus (5) holds. Again if (7) holds, then $\mathcal{F}_A$ defines an isomorphism 
\begin{align*}
    \mathcal{F}_A: \CH_\et(A)_{\text{tf}} \xrightarrow{\sim} \CH_\et(A)_{\text{tf}}. 
\end{align*}
The ideal $\CH^\et_{>0}(A)_{\text{tf}} \subset \CH_\et(A)_{\text{tf}} $ admits a PD-structure for the Pontryagin product. As $\mathcal{F}_A$ exchanges Pontryagin product by intersection product, we obtain (8).   
\end{proof}

Using the same arguments, we can obtain the following equivalences for the different notions of étale Fourier transform:
\begin{prop}\label{propEquiv}
Let $A$ be an abelian variety of dimension $g$ over a field $k$. The following assertions are equivalent:
\begin{enumerate}
    \item The class $\displaystyle \frac{c_1(\mathcal{P}_A)^{2g-1}}{(2g-1)!} \in H^{4g-2}_\et((A \times \widehat{A})_{k^s},\Z_\ell(2g-1))$ is the class of a cycle in $\CH^{2g-1}_\et(A \times \widehat{A})$.
    \item The abelian variety $A$ admits an étale integral Fourier transform up to homology which is algebraic.
    \item The abelian variety $A\times \widehat{A}$ admits an étale integral Fourier transform up to homology which is algebraic.
\end{enumerate}
If  we assume that $A$ carries a symmetric ample line bundle which induces a principal polarization $\lambda: A \xrightarrow{\sim} \widehat{A}$, therefore the previous statements are equivalent to the following:
\begin{enumerate}
    \item[(4)] The class $\displaystyle \frac{c_1(\mathcal{P}_A)^{2g-2}}{(2g-2)!} \in H^{4g-4}_\et((A \times \widehat{A})_{k^s},\Z_\ell(2g-2))$ is the class of a cycle in $\CH^{2g-2}_\et(A \times \widehat{A})$.
    \item[(5)] The class $\theta^{g-1}/(g-1)! \in H^{2g-2}_\et(A_{k^s},\Z_\ell(g-1))$ lifts to a cycle in $\CH^{g-1}_\et(A)$.
    \item[(6)] The abelian variety $A$ admits an integral étale Fourier transform up to homology.
\end{enumerate}

If $k=\C$ then the previous (1)-(6) is equivalent to the same statement replacing étale cohomology by Betti cohomology.
\end{prop}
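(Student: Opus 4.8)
The plan is to transport the proof of Theorem \ref{teoEquiv} essentially word for word, replacing the rationalization map $\CH_\et(-)\to\CH_\et(-)_\Q$ by the $\ell$-adic cycle class map $\text{cl}\colon\CH_\et(-)\to\bigoplus_i H^{2i}_\et((-)_{k^s},\Z_\ell(i))$, and the rational transform $\mathcal{F}_A$ by the cohomological transform $\mathfrak{F}_A$. Under this dictionary ``$x_\Q$ lifts to $\CH^j_\et$'' becomes ``the cohomology class $x$ is $\text{cl}$ of a cycle in $\CH^j_\et$'', and ``weak integral étale Fourier transform'' becomes ``algebraic integral étale Fourier transform up to homology''. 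The structural input I would isolate first is that $\text{cl}$ is a ring homomorphism compatible with flat pullback and proper pushforward -- hence with the Pontryagin product, the intersection product, and the exponential operation $E(-)$ -- and that $\mathfrak{F}_A$ is by definition the cohomological correspondence attached to $\text{cl}(\text{ch}(\mathcal{P}_A))=\text{ch}(\mathcal{P}_A)$, so that $\text{cl}$ intertwines any algebraic Fourier transform on cycles with $\mathfrak{F}_A$ on cohomology.

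Granting this, I would first prove $(1)\Rightarrow(2)$: pick $Z\in\CH^{2g-1}_\et(A\times\widehat{A})$ with $\text{cl}(Z)=\mathcal{R}_A$, set $\Gamma:=(-1)^g E((-1)^g Z)$, and invoke the cohomological form of \cite[Lemma 3.4]{BG} -- a formal identity relating the exponential and the Chern character that uses only the ring structure and functoriality preserved by $\text{cl}$ -- to conclude $\text{cl}(\Gamma)=\text{ch}(\mathcal{P}_A)$; then $\Gamma$ induces the required algebraic transform up to homology. The equivalence $(1)\Leftrightarrow(3)$ I would read off, exactly as in Theorem \ref{teoEquiv}, from $\mathcal{P}_{A\times\widehat{A}}\simeq\pi_{13}^*\mathcal{P}_A\otimes\pi_{24}^*\mathcal{P}_{\widehat{A}}$ and the induced splitting of $\mathcal{R}_{A\times\widehat{A}}$ read in $\ell$-adic cohomology, together with the identity $(-1)^g\mathfrak{F}_{\widehat{A}\times A}(-\widehat{\ell})=\mathcal{R}_A$ for the direction $(3)\Rightarrow(1)$.

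Under the principal polarization $\lambda\colon A\xrightarrow{\sim}\widehat{A}$ I would close the remaining equivalences along the cycle $(4)\Rightarrow(5)\Rightarrow(1)\Rightarrow(2)\Rightarrow(4)$ supplemented by the two easy links $(2)\Rightarrow(6)$ and $(6)\Rightarrow(5)$. Here $(2)\Rightarrow(4)$ is immediate by taking the codimension-$(2g-2)$ component of a cycle $\Gamma$ with $\text{cl}(\Gamma)=\text{ch}(\mathcal{P}_A)$; $(4)\Rightarrow(5)$ uses the auxiliary homomorphism $\CH^1_{\text{Sym}}(A)\to\CH^\et_1(A)$ built from $\text{pr}_1^*$, product with a cycle lifting $\ell^{2g-2}/(2g-2)!$, and $\text{pr}_{2*}$, which sends $c_1(\mathcal{L})$ to a cycle whose class is $\theta^{g-1}/(g-1)!$; $(5)\Rightarrow(1)$ is the cohomological form of \cite[Lemma 3.5]{BG}; and $(6)\Rightarrow(5)$ follows by applying the transform $\mathcal{F}_\et$ to the genuinely integral class $\theta=c_1(\mathcal{L})\in\CH^1_\et(A)$ and transporting through $\lambda$, using the standard identity $\mathfrak{F}_A(\theta)=\pm\,\theta^{g-1}/(g-1)!$. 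For $k=\C$ the comparison isomorphism between $H^\bullet_\et(A_{k^s},\Z_\ell(\bullet))$ and $H^\bullet_B(A,\Z)\otimes\Z_\ell$, compatible with cycle class maps, products, and both Fourier transforms, lets me transport every statement to its Betti counterpart.

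The main obstacle I expect is precisely the structural input isolated at the start: one must verify carefully that the cycle class map out of étale motivic (Lichtenbaum) cohomology is multiplicative and compatible with proper pushforward and with $E(-)$, so that the formal identities of \cite[Lemma 3.4]{BG} and \cite[Lemma 3.5]{BG} survive \emph{integrally} in $\ell$-adic cohomology rather than merely rationally. Once this compatibility is in hand, each implication is the cohomological shadow of one already established in Theorem \ref{teoEquiv}, and since $\mathfrak{F}_A$ is known to be an isomorphism preserving integral $\ell$-adic classes there is no torsion ambiguity left to obstruct the equivalences.
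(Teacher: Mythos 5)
Your proposal is correct and matches the paper's own proof, which is exactly the dictionary you describe: the paper proves Proposition \ref{propEquiv} by declaring the argument ``analogous to Theorem \ref{teoEquiv}'' with the rationalization map replaced by the $\ell$-adic cycle class map, and handles $k=\C$ via the comparison isomorphism $H^i_\et(A,\Z_\ell)\simeq H^i_B(A,\Z)\otimes\Z_\ell$ together with the fact that a Betti class is in the image of the cycle class map if and only if its $\ell$-adic counterpart is. Your version is in fact more careful than the paper's, since you explicitly isolate the needed structural input (multiplicativity of $\mathrm{cl}$ and its compatibility with proper pushforward and the exponential $E(-)$, so that the identities of \cite[Lemmas 3.4 and 3.5]{BG} hold integrally in cohomology), which the paper leaves implicit.
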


\begin{proof}
The proof of the equivalence is analogous to the one in Theorem \ref{teoEquiv}. If the base field is $k=\C$, we have an isomorphism $H_\et^i(A,\Z_\ell) \simeq H_B^i(A,\Z_\ell)$ and the fact that $\beta \in H^{2i}_B(A,\Z)$ is in the image of the cycle class map if and only if $\beta_\ell \in H^{2i}_\et(A,\Z_\ell)$ is in the image of the cycle class map.
\end{proof}

\begin{remark}
Since the PD-sctructure on $(\CH^\et_{>0}(A),*)$ induces a PD-structure on $(\CH^\et_{>0}(A)_{\Z_\ell},*)$, Proposition \ref{propEquiv} remains true if we replace $\CH_\et(A)$ by $\CH_\et(A)_{\Z_\ell}$ and ``étale integral Fourier transform up to homology" by ``étale $\ell$-adic integral Fourier transform up to homology".
\end{remark}

\begin{corollary}\label{coroAlg}
Let $k$ be one of the following fields: $\C$, finitely generated over $\Q$, or $\mathbb{F}_{p^r}$ for $p$ a prime and $r \in \N$. Then every abelian variety $A/k$ admits an étale algebraic integral Fourier transform up to homology in the complex case and an étale $\ell$-adic integral Fourier transform up to homology for the other cases.
\end{corollary}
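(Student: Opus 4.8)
The plan is to deduce this from the cohomological equivalences of Proposition~\ref{propEquiv} together with the Remark that follows it, reducing everything to the algebraicity, in étale (Lichtenbaum) cohomology, of a single explicit class. Concretely, it suffices to verify criterion~(1) of Proposition~\ref{propEquiv}: that the class
\[
\mathcal{R}_A=\frac{c_1(\mathcal{P}_A)^{2g-1}}{(2g-1)!}\in H^{4g-2}_\et\big((A\times\widehat{A})_{k^s},\Z_\ell(2g-1)\big)
\]
(respectively its Betti avatar when $k=\C$) lies in the image of the étale cycle class map from $\CH^{2g-1}_\et(A\times\widehat{A})$. Granting this, Proposition~\ref{propEquiv} produces the desired Fourier transform; over $\C$ one works with genuinely integral Betti cohomology and obtains an algebraic integral étale Fourier transform up to homology, while over the finitely generated and finite fields the Remark lets one replace $\Z$ by $\Z_\ell$ throughout, yielding the $\ell$-adic statement. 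No polarization hypothesis is needed for criterion~(1), so the reduction applies to an arbitrary $A$.

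First I would record two elementary facts about $\mathcal{R}_A$. It is integral in cohomology: this is the classical statement recalled above, that the Fourier kernel $\mathfrak{F}_A$ preserves integral ($\ell$-adic, resp.\ Betti) cohomology classes, so that the rational class $\mathcal{R}_A$ in fact lands in the integral lattice. Moreover it is rationally algebraic for a trivial reason: $\mathcal{R}_A$ is the rational multiple $\tfrac{1}{(2g-1)!}$ of the class $c_1(\mathcal{P}_A)^{2g-1}$, a power of the algebraic divisor class $\ell=c_1(\mathcal{P}_A)$, and hence lies in the image of the \emph{rational} cycle class map. Thus the whole difficulty is integral: to promote a rational algebraic cycle to an integral (resp.\ $\Z_\ell$) class in the étale Chow group.

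The decisive step is that, over the three families of fields in the statement, such a promotion is possible, and this is where the special fields enter. It is the analogue, for the étale cycle class map, of the corresponding results of \cite{BG}: over $\C$ the one-cycle class $\mathcal{R}_A$ is algebraic up to homology with integral Betti coefficients, and over finite and finitely generated fields its $\ell$-adic counterpart is, because for these fields the relevant rational Hodge/Tate statement is available — automatic here, as $\mathcal{R}_A$ is a power of a divisor class — and the torsion obstruction to integrality is controlled. Since the comparison functor $\mathrm{DM}(-,\Z)\to\mathrm{DM}_\et(-,\Z)$ makes the ordinary (and Lichtenbaum) cycle class map factor through the étale one, any class in the image of the integral ordinary cycle class map is a fortiori in the image of the étale one; I would therefore import the algebraicity of $\mathcal{R}_A$ from \cite{BG} and transport it across this comparison, appealing to \cite{CD16} and \cite{RS} to guarantee that the étale cycle class map and the identification of Lichtenbaum cohomology behave as required over these fields.

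The main obstacle is precisely this last step: establishing the integral (over $\C$), respectively $\ell$-adic (over finite and finitely generated fields), algebraicity of $\mathcal{R}_A$. It is genuinely arithmetic and not formal — for ordinary Chow groups the integral version fails, as Ésnault's counterexample over $\mathbb{F}_q$ shows, so one cannot hope to dispense with both the étale topology and the restriction on the base field. The étale passage itself, once the class is known to be algebraic, is routine; the crux is to know that for these particular fields the one-cycle class $\mathcal{R}_A$ — equivalently, under a principal polarization, the minimal class $\Theta^{g-1}/(g-1)!$ — is hit by the cycle class map with integral, respectively $\Z_\ell$, coefficients.
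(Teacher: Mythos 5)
Your reduction is the right one, and it matches the paper's skeleton: everything comes down to showing that the integral cohomology class $\mathcal{R}_A = c_1(\mathcal{P}_A)^{2g-1}/(2g-1)!$ (Betti over $\C$, $\ell$-adic otherwise) lies in the image of the \emph{étale} cycle class map, after which Proposition \ref{propEquiv} (with the exponential identity $\mathrm{ch}(\mathcal{P}_A)=(-1)^g E((-1)^g\mathcal{R}_A)$ used in Theorem \ref{teoEquiv}) produces the algebraic Fourier transform up to homology; the integrality of $\mathcal{R}_A$ in cohomology and its rational algebraicity are indeed immediate. The genuine gap is in your decisive step: you propose to import the \emph{integral} algebraicity of $\mathcal{R}_A$ from \cite{BG}, i.e.\ from ordinary Chow groups, and push it forward along the comparison $\CH \to \CH_\et$. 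But over $\C$, and over fields finitely generated over $\Q$, no such integral statement exists in ordinary Chow groups: by the equivalences of \cite{BG} themselves, the integral algebraicity of $\mathcal{R}_A$ up to homology amounts to the integral Hodge (resp.\ Tate) conjecture for one-cycles on $A\times\widehat{A}$, which is open in general and which \cite{BG} establish unconditionally only in special situations (Jacobians and related cases; only over finite fields do they get an unconditional $\ell$-adic statement). So for two of your three families of fields the class you want to transport across the comparison functor is not known to exist, and the argument becomes circular.

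What makes the corollary unconditional --- and this is the paper's actual proof --- is that the promotion from rational to integral algebraicity happens \emph{at the étale level}, not before: the theorems of Rosenschon--Srinivas (\cite[Theorem 1.1]{RS} over $\C$, \cite[Theorems 1.3 and 1.4]{RS} in the $\ell$-adic setting over finitely generated and finite fields) say that the obstruction to lifting an integral Hodge/Tate class along the Lichtenbaum (étale) cycle class map is torsion-free, so the rational Hodge/Tate statement in the relevant degree already implies its integral Lichtenbaum version. Since the rational statement holds for $\mathcal{R}_A$ (a power of a divisor class, as you note), the integral étale lift exists directly, with no input from ordinary Chow groups; the étale analogues of \cite[Corollary 4.1]{BG} and \cite[Corollary 6.1]{BG}, together with the displayed formula for $\mathcal{R}_{A\times\widehat{A}}$ in the proof of Theorem \ref{teoEquiv}, then convert this lift into the Fourier statement. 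Your proposal cites \cite{RS} only to ``guarantee that the étale cycle class map behaves as required,'' but these theorems are the load-bearing step, replacing exactly the BG-import you rely on: your phrase ``the torsion obstruction to integrality is controlled'' is precisely the assertion requiring proof, and it is supplied by \cite{RS} at the étale level, not by \cite{BG} at the Chow level.
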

\begin{proof}
For the complex case this is a consequence of \cite[Theorem 1.1]{RS} and the étale analog of \cite[Corollary 4.1]{BG}. The other cases are a consequence of \cite[Theorem 1.3 and 1.4]{RS} respectively and \cite[Corollary 6.1]{BG} using the formula of $\mathcal{R}_{A\times \widehat{A}}$ in proof of Theorem \ref{teoEquiv}.
\end{proof}

\begin{corollary}{\label{FourierTran}}
Let $k$ be an algebraically closed field and let $\ell\neq \text{char}(k)$ be a prime integer. Given an abelian variety $A/k$, then for $\ell$ there exists an $\ell$-adic integral étale Fourier transform up to homology which is algebraic.
\end{corollary}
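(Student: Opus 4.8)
The plan is to deduce the corollary not by specializing Corollary \ref{coroAlg}, but from the fact that the divided-power structure of Corollary \ref{coro2} is available over \emph{every} algebraically closed field; this is exactly what makes the statement unconditional once $k=\bar k$. Since $k^s=k$, the $\ell$-adic form of Proposition \ref{propEquiv} provided by the Remark tells us that producing an algebraic $\ell$-adic integral étale Fourier transform up to homology for $A$ amounts to lifting the class
\[
\mathcal{R}_A=\frac{c_1(\mathcal{P}_A)^{2g-1}}{(2g-1)!}\in H^{4g-2}_\et\bigl((A\times\widehat{A})_{k},\Z_\ell(2g-1)\bigr)
\]
to the image of the Lichtenbaum cycle class map on $\CH^{2g-1}_\et(A\times\widehat{A})_{\Z_\ell}$. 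By the equivalence $(2)\Leftrightarrow(3)$ of Proposition \ref{propEquiv}, it suffices to produce the corresponding transform for $B:=A\times\widehat{A}$, the advantage being that $B$ carries a canonical principal polarization; hence the full list of equivalences of Theorem \ref{teoEquiv}, and in particular condition $(8)$, becomes available for $B$.

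The entry point is the PD-structure. For $B$ over $k=\bar k$, Corollary \ref{coro2} furnishes a canonical PD-structure for the Pontryagin product on the augmentation ideal of $\CH_*^\et(B)$, hence on $\CH_{>0}^\et(B)$; as recorded in the Remark, this induces a PD-structure on $\CH_{>0}^\et(B)_{\Z_\ell}$. This is precisely condition $(8)$ of Theorem \ref{teoEquiv} read with $\Z_\ell$-coefficients, and it holds with no hypothesis on $k$ beyond $k=\bar k$, because the construction of the divided powers in Corollary \ref{coro1} and Corollary \ref{coro2} never used a special form of the base field.

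I would then run the equivalences of Theorem \ref{teoEquiv} for $B$ with $\Z_\ell$-coefficients. Condition $(8)$, just secured, is equivalent to condition $(2)$: $B$ admits a weak integral étale Fourier transform, which is in particular an integral étale Fourier transform up to homology, and it is algebraic. But such a transform for $B=A\times\widehat{A}$ is exactly condition $(3)$ of Proposition \ref{propEquiv} for $A$, and by $(3)\Leftrightarrow(2)$ there it returns the algebraic $\ell$-adic integral étale Fourier transform up to homology for $A$ itself; concretely, one obtains a cycle $\Gamma_\ell\in\CH_\et(A\times\widehat{A})_{\Z_\ell}$ with $\text{cl}(\Gamma_\ell)=\text{ch}(\mathcal{P}_A)$, which is the asserted transform.

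The step I expect to be the main obstacle is the $\Z_\ell$-coefficient bookkeeping in Theorem \ref{teoEquiv}: the Remark states only the $\ell$-adic form of Proposition \ref{propEquiv} explicitly, so I must check that the equivalences of Theorem \ref{teoEquiv} — above all the implication $(8)\Rightarrow(2)$ transporting the PD-structure across $\mathcal{F}_B$ — persist after $\otimes\,\Z_\ell$. The essential point is that the torsion-free formulation of conditions $(7)$ and $(8)$ is here replaced by the $\Z_\ell$-formulation, for which the Remark already guarantees that the Pontryagin-product PD-structure of Corollary \ref{coro2} descends (sidestepping the subtler question of whether it descends to the torsion-free quotient over $\Z$); it then remains only to see that $\mathcal{F}_B$ still exchanges the Pontryagin and intersection products after tensoring with $\Z_\ell$. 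Once this is in place the argument is uniform in $k$, so that, in contrast with Corollary \ref{coroAlg}, no spreading out to a finitely generated or finite field and no specialization of cycle classes is required.
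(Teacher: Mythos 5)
There is a genuine gap at the central step of your argument: you identify the PD-structure furnished by Corollary \ref{coro2} with condition (8) of Theorem \ref{teoEquiv}, but these are two different structures. Corollary \ref{coro2} gives divided powers on the ideal $\CH^\et_{>0}(B)$ of cycles of positive \emph{dimension}, with respect to the \emph{Pontryagin} product; condition (8) asks for a PD-structure on $\CH_\et^{>0}(B)_{tf}$, the cycles of positive \emph{codimension}, with respect to the \emph{intersection} product. The two are related only through the Fourier transform itself: in the paper's proof of Theorem \ref{teoEquiv}, (8) is deduced from (7) by transporting the Pontryagin structure along $\mathcal{F}_B$, using that $\mathcal{F}$ exchanges the two products --- and this transport requires $\mathcal{F}$ to preserve the torsion-free (resp.\ $\Z_\ell$) lattice, which is essentially the conclusion you are trying to prove. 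Your chain ``Corollary \ref{coro2} $\Rightarrow$ (8) $\Rightarrow$ (2)'' is therefore circular. A sanity check confirms this: the Pontryagin PD-structure of Corollaries \ref{coro1} and \ref{coro2} exists over every algebraically closed field with integral coefficients, so if it really implied (8) and hence (2), you would obtain a weak integral étale Fourier transform with $\Z$-coefficients directly, with no $\ell$-adic input whatsoever --- bypassing Corollary \ref{coroAlg}, Theorem \ref{teoAlg} and the results of \cite{RS} that the paper needs to reach Corollary \ref{coro}. Two further, smaller problems: the equivalences involving (7) and (8) appear only in Theorem \ref{teoEquiv}, while the Remark extends only Proposition \ref{propEquiv} (whose list stops at (6)) to $\Z_\ell$-coefficients, so the $\Z_\ell$-form of $(8)\Rightarrow(2)$ you invoke is not on record; and the paper's written proof of Theorem \ref{teoEquiv} in fact only establishes $(7)\Rightarrow(8)$, never an implication out of (8).

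The paper's actual proof is entirely different and avoids PD-structures: for $X$ smooth projective over $k=\bar{k}$, the cokernel of the $\ell$-adic cycle class map $c^i_{\et,\ell}:\CH^i_\et(X)_{\Z_\ell}\to H^{2i}_\et(X,\Z_\ell)$ has trivial $\ell$-torsion, so for the finitely generated $\Z_\ell$-submodule $G$ generated by $\mathcal{R}_A=c_1(\mathcal{P}_A)^{2g-1}/(2g-1)!$ the quotient of $G$ by the image is torsion-free; hence $G$ lifts to $\CH^{2g-1}_\et(A\times\widehat{A})_{\Z_\ell}$ as soon as it lifts rationally, which it does since $\mathcal{R}_A$ is a $\Q$-multiple of a power of the divisor class $c_1(\mathcal{P}_A)$. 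One then concludes by the $\ell$-adic form of Proposition \ref{propEquiv}, condition (1). If you wanted to salvage a PD-theoretic route, you would need an unconditional \emph{intersection-product} PD-structure allowing you to lift $\gamma_{g-1}(\Theta)=\Theta^{g-1}/(g-1)!$ as in condition (5); no such structure is available, and producing it is precisely as hard as the theorem itself.
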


\begin{proof}
Consider a smooth projective variety $X$ over $k$, then we have a cycle class map $c_{\et,\ell}^i: \CH^i_\et(X)_{\Z_\ell} \to H^{2i}_\et(X,\Z_\ell)$. Consider a finitely generated sub $\Z_\ell$-module $G \subseteq H^{2i}_\et(X,\Z_\ell)$. Let $\CH^i_\et(X)_{\Z_\ell}\supseteq W:=c^{i,-1}_{\et,\ell}(G)$ and take the map $f$ as $c^i_{\et,\ell}$ restricted to $W$, thus we have $f: W \to G$. Denoting by  $I_{W,\et,\ell}^{2i}(X)=\text{im}(f)$ and $I_{\et,\ell}^{2i}(X):\text{im}(c^{i}_{\et,\ell})$, then we have that $\left(G/I_{W,\et,\ell}^{2i}(X)\right)\{\ell\}\hookrightarrow \left(H^{2i}_\et(X,\Z_\ell)/I_{\et,\ell}^{2i}(X)\right)\{\ell\}=0$. Thus we can conclude that $\left(G/I_{W,\et,\ell}^{2i}(X)\right)$ is a torsion free $\Z_\ell$-module, thus 
\begin{align*}
    \left(G/I_{W,\et,\ell}^{2i}(X)\right)\otimes \Q =0 \iff \left(G/I_{W,\et,\ell}^{2i}(X)\right)=0
\end{align*}
So this implies that $G$ is in the preimage of $\CH^i_\et(X)_{\Z_\ell}$ if and only if  $G\otimes \Q_\ell$ is in the preimage of $\CH^i_\et(X)_{\Q_\ell}$. In particular, consider $X=A \times \widehat{A}$ and the integral $\ell$-adic class $\displaystyle \frac{c_1(\mathcal{P}_A)^{2g-1}}{(2g-1)!} \in H^{4g-2}_\et(A \times \widehat{A},\Z_\ell(2g-1))$ and let $G$ be the  $\Z_\ell$-submodule of $H^{4g-2}_\et(A \times \widehat{A},\Z_\ell(2g-1))$ generated by $\displaystyle \frac{c_1(\mathcal{P}_A)^{2g-1}}{(2g-1)!}$ (which is algebraic with rational coefficients), thus by the previous remark we get that it lifts to $\CH_1^\et(A \times \widehat{A})_{\Z_\ell}$.
\end{proof}

\begin{remark}
Although we can lift the Chern class of the Poincaré bundle $\mathcal{P}_A$, it is not clear whether the Tate conjecture holds for abelian varieties due to the obstruction $T_\ell(\text{Br}^n(A \times \widehat{A}))$.
\end{remark}

\begin{theorem}\label{teoAlg}
Let $k$ be an algebraically closed field, then for any abelian variety $A$ over $k$ there exists an integral algebraic étale Fourier transform if and only if for all $\ell \neq \text{char}(k)$, $A/k$ admits an étale $\ell$-adic integral Fourier transform up to homology.
\end{theorem}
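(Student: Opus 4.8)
The plan is to route both implications, through the equivalences already proved, to the single question of whether the class $\alpha:=\frac{c_1(\mathcal{P}_A)^{2g-1}}{(2g-1)!}\in\CH^{2g-1}_\et(A\times\widehat{A})_\Q$ lifts to $\CH^{2g-1}_\et(A\times\widehat{A})$, and then to control that lifting prime by prime. Throughout I write $X=A\times\widehat{A}$, which is smooth projective, so that $\CH^n_\et(X)\simeq H^{2n}_L(X,\Z(n))$, and I set $n=2g-1$.

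The forward implication is formal. An integral algebraic étale Fourier transform is induced by a cycle $\Gamma\in\CH_\et(X)$ with $\Gamma_\Q=\text{ch}(\mathcal{P}_A)$; for each prime $\ell\neq\text{char}(k)$ I would apply $-\otimes_\Z\Z_\ell$ and compose with the $\ell$-adic cycle class map. Compatibility of the integral and $\ell$-adic cycle classes gives $\text{cl}(\Gamma\otimes\Z_\ell)=\text{ch}(\mathcal{P}_A)$ in $\bigoplus_i H^{2i}_\et(X,\Z_\ell(i))$, so $\Gamma\otimes\Z_\ell$ is an $\ell$-adic integral Fourier transform up to homology, algebraic by construction.

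For the converse I would first use the equivalence (1)$\Leftrightarrow$(2) of Theorem \ref{teoEquiv} to reduce the existence of an integral algebraic étale Fourier transform to the integral liftability of $\alpha$; the exponential recipe in the proof of that theorem turns any such lift into a cycle $\Gamma$ with $\Gamma_\Q=\text{ch}(\mathcal{P}_A)$. I then read the obstruction off the long exact sequence attached to $0\to\Z(n)\to\Q(n)\to\Q/\Z(n)\to 0$: the class $\alpha$ lifts to $H^{2n}_L(X,\Z(n))$ exactly when its boundary $\partial(\alpha)\in H^{2n}_L(X,\Q/\Z(n))$ vanishes, and this target is torsion. Since $\Q/\Z=\bigoplus_{\ell\neq\text{char}(k)}\Q_\ell/\Z_\ell$ in these coefficients and the relevant Lichtenbaum complexes have finite cohomological dimension, cohomology commutes with the sum, so $\partial(\alpha)=0$ if and only if each $\ell$-component $\partial_\ell(\alpha)\in H^{2n}_L(X,\Q_\ell/\Z_\ell(n))$ vanishes.

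It remains to match $\partial_\ell(\alpha)=0$ with the hypothesis, and here lies the technical heart and the main obstacle. The key input is that Lichtenbaum cohomology with finite coefficients is étale cohomology, $H^{m}_L(X,\Z/\ell^r(n))\simeq H^{m}_\et(X,\Z/\ell^r(n))$ for all $m$ and $\ell\neq\text{char}(k)$; passing to the colimit in $r$ identifies $H^{2n}_L(X,\Q_\ell/\Z_\ell(n))$ with $\ell$-adic étale cohomology. Comparing the boundary of the displayed sequence with that of $0\to\Z_\ell(n)\to\Q_\ell(n)\to\Q_\ell/\Z_\ell(n)\to 0$ through the cycle class map shows that $\partial_\ell(\alpha)$ is the image of the étale class of $\alpha$ under reduction, so it vanishes precisely when that class is $\ell$-integral. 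The existence of an $\ell$-adic integral Fourier transform up to homology delivers exactly this: by Proposition \ref{propEquiv} and its following remark it forces the $\ell$-adic class $\frac{c_1(\mathcal{P}_A)^{2g-1}}{(2g-1)!}$ to be algebraic, hence integral, whence $\partial_\ell(\alpha)=0$. The points demanding the most care are this comparison of boundary maps across the Lichtenbaum–étale identification, the passage between the uncompleted coefficients $\Q/\Z$ governing the Chow lift and the completed $\Z_\ell$-coefficients in which the hypothesis is stated, and the treatment of the characteristic-$p$ part, which is handled by the $[1/p]$-comparison recalled in the preliminaries. Assembling the vanishing over all $\ell$ gives $\partial(\alpha)=0$, hence the integral lift of $\alpha$ and the desired integral algebraic étale Fourier transform.
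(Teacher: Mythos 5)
Your proposal is correct in outline, but it takes a genuinely different route from the paper. The paper never localizes the problem to the single class $\mathcal{R}_A$: its converse is a global diagram chase on the four-term sequence $0 \to \CH_\et(A)_{\text{tors}} \to \CH_\et(A) \to \CH_\et(A)_{\Q} \to \CH_\et(A)_{\Q/\Z} \to 0$. It first produces, unconditionally, an isomorphism on torsion $\CH^i_\et(A)\{\ell\} \simeq \CH^{g-i+1}_\et(\widehat{A})\{\ell\}$ by feeding the cohomological Fourier transform through the sequence $0 \to H^i_\et(A,\Z_\ell(j)) \to H^i_\et(A,\Q_\ell(j)) \to H^i_\et(A,\Q_\ell/\Z_\ell(j)) \to 0$ and the identification $H^i_\et(A,\Q_\ell/\Z_\ell(j)) \simeq H^{i+1}_{M,\et}(A,\Z(j))\{\ell\}$ of \cite[Proposition 5.1]{RS}; it then uses the $\ell$-adic hypothesis, together with the $\ell$-divisibility of the kernel of the cycle class map, to produce a compatible isomorphism $F_A$ on the $\Q/\Z$-quotients, and splices these with $\mathcal{F}_A$ to lift it to an integral homomorphism $\mathfrak{F}_A$ with $\mathfrak{F}_A \otimes \Q = \mathcal{F}_A$ (characteristic $p$ handled via $\CH_\et(A)\simeq \CH_L(A)[1/p]$, as you also do). Your route instead reduces via Theorem \ref{teoEquiv}(1)$\Leftrightarrow$(2) to lifting $\mathcal{R}_A$ and reads the obstruction in $H^{2n}_L(X,\Q/\Z(n)) \simeq \bigoplus_{\ell} H^{2n}_\et(X,\Q_\ell/\Z_\ell(n))$ prime by prime. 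What your route buys: it outputs the cycle $\Gamma=(-1)^g E((-1)^g Z)$ directly, so the word \emph{algebraic} in the statement comes for free, whereas the paper's chase produces a homomorphism and, strictly speaking, must be rerouted through Theorem \ref{teoEquiv} to obtain algebraicity. What the paper's route buys: finer structural output (the torsion isomorphism, control of $\mathfrak{F}_A$ on all of $\CH_\et(A)$), and it sidesteps the compatibility between the Lichtenbaum coefficient sequence $\Z(n)\to\Q(n)\to\Q/\Z(n)$ and the continuous sequence $\Z_\ell(n)\to\Q_\ell(n)\to\Q_\ell/\Z_\ell(n)$ that you rightly flag as the delicate point of your argument; that compatibility is standard and rests on the same ingredients as \cite[Proposition 5.1]{RS}, but it does need to be written out, and it is only reliable here because $k=\bar{k}$ and $H^\bullet_\et(A\times\widehat{A},\Z_\ell)$ is torsion free.

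One repair is needed in your use of the hypothesis: you invoke Proposition \ref{propEquiv} and its remark to conclude that an $\ell$-adic integral Fourier transform up to homology "forces the $\ell$-adic class to be algebraic," but the implication (6)$\Rightarrow$(1) there is stated only under a principal polarization. For general $A$ you should either transfer the hypothesis to the principally polarized $A\times\widehat{A}$, or apply the given $\ell$-adic transform to the integral class $-\widehat{\ell}$ as in the step (3)$\Rightarrow$(1) of Theorem \ref{teoEquiv}. Simpler still: your mechanism only needs $\ell$-integrality (not algebraicity) of the class of $\mathcal{R}_A$ in $H^{4g-2}_\et(A\times\widehat{A},\Z_\ell(2g-1))$, and that is unconditional — it is exactly the "integral $\ell$-adic class" the paper exploits in Corollary \ref{FourierTran}. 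This explains a feature of your argument you should be aware of: as written it proves the lifting with essentially no use of the stated hypothesis, i.e., it proves Corollary \ref{coro} directly; that is logically harmless for the "if" direction of the theorem, and consistent with the fact that over $\bar{k}$ the hypothesis always holds.
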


\begin{proof}
One way is clear. For the other one, we will split the proof into several parts: first, we prove that the Fourier transform preserves torsion classes. Note that since $H^i_\et(A,\Z_\ell(j))$ is a torsion-free $\Z_\ell$ module, then we have a short exact sequence
\begin{align*}
    0 \to H^i_\et(A,\Z_\ell(j)) \to H^i_\et(A,\Q_\ell(j)) \to H^i_\et(A,\Q_\ell/\Z_\ell(j)) \to 0,
\end{align*}
assuming that $i\neq 2j+1$ by \cite[Proposition 5.1]{RS} then we have an isomorphism $H^i_\et(A,\Q_\ell/\Z_\ell(j)) \simeq H^{i+1}_{M,\et}(A,\Z(j))\{\ell\}$, the same holds for $\widehat{A}$. With this we have obtain a commutative diagram
\[
\begin{tikzcd}
  0 \arrow{r } & H^i_\et(A,\Z_\ell(j))\arrow{r }\arrow{d}{\mathfrak{F}_A} & H^i_\et(A,\Q_\ell(j)) \arrow{d}{\mathfrak{F}_A}\arrow{r } & H^i_\et(A,\Q_\ell/\Z_\ell(j)) \arrow{d}{\mathfrak{F}^{q,\ell}_A}\arrow{r }& 0 \\
    0 \arrow{r } & H^{2g-i}_\et(\widehat{A},\Z_\ell(a))\arrow{r } & H^{2g-i}_\et(\widehat{A},\Q_\ell(a)) \arrow{r } & H^{2g-i}_\et(\widehat{A},\Q_\ell/\Z_\ell(a)) \arrow{r }& 0
\end{tikzcd}
\]
where $a=j+g-i$ and $\mathfrak{F}^{q,\ell}_A$ is the induced map by the quotient, so we have an morphism of torsion groups, thus we have that
$\mathfrak{F}^{q,\ell}_A:\CH^{i}_\et(A)\{\ell\} \xrightarrow{\simeq}\CH^{g-i+1}_\et(\widehat{A})\{\ell\}$. 
Assuming that for each prime number $\ell\neq \text{char}(k)$, then we have a commutative diagram
    \[
    \begin{tikzcd}
        \CH_\et^i(A)_{\Z_\ell} \arrow{r}\arrow{d}{c^i_{\et,\ell}}& \CH_\et(A)_{\Z_\ell} \arrow{r}{\mathcal{F}_\et}\arrow{d}& \CH_\et(\widehat{A})_{\Z_\ell} \arrow{d} \arrow{r}& \CH^{g-i}_\et(\widehat{A})_{\Z_\ell} \arrow{d}{c^{g-i}_{\et,\ell}} \\ 
        H^{2i}_\et(A,\Z_\ell) \arrow{r}& H^\bullet_\et(A,\Z_\ell) \arrow{r} & H^\bullet_\et(\widehat{A},\Z_\ell) \arrow{r} & H^{2(g-i)}_\et(\widehat{A},\Z_\ell)
    \end{tikzcd}
    \]
swapping the Fourier transform $\widehat{A}$ with the double dual of $A$, we can conclude that $c^i_{\et,\ell}(\CH_\et^i(A)_{\Z_\ell})\simeq c^{g-i}_{\et,\ell}(\CH_\et^{g-i}(\widehat{A})_{\Z_\ell})$. Since the kernel of the cycle class map $c^i_{\et,\ell}$ is $\ell$-divisible, then $\CH_\et^i(A)\otimes\Q_\ell/\Z_\ell\simeq I^{2i}_{\et,\ell}(A)\otimes\Q_\ell/\Z_\ell$.

Consider the short exact sequence $0 \to \Z_\ell \to \Q_\ell \to \Q_\ell/\Z_\ell \to 0 $, then by the previous remark, we obtain a quotient map $F^\ell_A:\CH_\et(A)_{\Q_\ell/\Z_\ell} \to \CH_\et(\widehat{A})_{\Q_\ell/\Z_\ell}$ which is an isomorphism. First we assume that $\text{char}(k)=0$, which gives us a commutative diagram
\[
\begin{tikzcd}
  0 \arrow{r } &\CH_\et(A)_{\text{tors}}\arrow{d}{\mathfrak{F}_A^q}\arrow{r} & \CH_\et(A)  \arrow{d}{\mathfrak{F}_A}\arrow{r} &\CH_\et(A)_{\Q} \arrow{d}{\mathcal{F}_A}\arrow{r}& \arrow{r}\CH_\et(A)_{\Q/\Z}\arrow{d}{F_A} & 0\\
    0 \arrow{r} & \CH_\et(\widehat{A})_{\text{tors}}\arrow{r} & \CH_\et(\widehat{A}) \arrow{r} & \CH_\et(\widehat{A})_{\Q} \arrow{r}&\CH_\et(\widehat{A})_{\Q/\Z}  \arrow{r}&0
\end{tikzcd}
\]
where $\displaystyle \mathfrak{F}_A^q= \bigoplus_{\ell \neq \text{char}(k)} \mathfrak{F}_A^{q,\ell}$ and $\displaystyle  F_A= \bigoplus_{\ell \neq \text{char}(k)} F_A^{\ell}$. In particular we found $\mathfrak{F}_A \otimes \Q= \mathcal{F}_A$, so it preserves integral étale cycles. If we work over positive characteristic $p$, then we take $\ell\neq p$ and use the fact that $\CH_\et(A)\simeq \CH_L(A)[1/p]$ where $\CH_L(A)$ are the Lichtenbaum cohomology groups of $A$.
\end{proof}

\begin{corollary}\label{coro}
Let $k=\bar{k}$ an $A/k$ be an abelian variety, then there exists an integral algebraic étale Fourier transform.
\end{corollary}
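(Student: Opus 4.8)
The plan is to deduce the statement directly from the two preceding results, Theorem \ref{teoAlg} and Corollary \ref{FourierTran}, so that no new geometric input is required. Since $k = \bar{k}$ is algebraically closed, Theorem \ref{teoAlg} is available, and it reduces the existence of an integral algebraic étale Fourier transform to a purely $\ell$-local condition: that for every prime $\ell \neq \text{char}(k)$ the abelian variety $A$ admits an étale $\ell$-adic integral Fourier transform up to homology. Thus the first step is simply to record that the hypothesis of the backward implication of Theorem \ref{teoAlg} is the existence of such a transform for \emph{every} such $\ell$.

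Next I would verify this local condition $\ell$ by $\ell$, which is exactly the content of Corollary \ref{FourierTran}: for each prime $\ell \neq \text{char}(k)$ there is an algebraic $\ell$-adic integral étale Fourier transform up to homology, produced by lifting the Poincaré class $\frac{c_1(\mathcal{P}_A)^{2g-1}}{(2g-1)!}$ to $\CH^{2g-1}_\et(A \times \widehat{A})_{\Z_\ell}$. An algebraic $\ell$-adic transform up to homology is in particular an $\ell$-adic integral étale Fourier transform up to homology, so Corollary \ref{FourierTran} supplies precisely the family of inputs demanded by Theorem \ref{teoAlg}, indexed over all $\ell \neq \text{char}(k)$ at once. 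Feeding this family into the backward implication of Theorem \ref{teoAlg} then yields an integral algebraic étale Fourier transform, which is the assertion.

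I expect the formal combination to be immediate; the genuine work sits in the two cited results rather than in the corollary. The conceptual core is the gluing argument internal to Theorem \ref{teoAlg}, which assembles the collection of $\ell$-adic lifts—via the short exact sequences relating $\Z_\ell$, $\Q_\ell$ and $\Q_\ell/\Z_\ell$ coefficients and the comparison between torsion in étale motivic cohomology and in $\ell$-adic cohomology over $\bar{k}$—into a single integral (as opposed to merely $\ell$-adic) statement. Consequently the only point one must check with care is that the output of Corollary \ref{FourierTran} matches the hypothesis consumed by Theorem \ref{teoAlg} for every $\ell \neq \text{char}(k)$ simultaneously; once this matching is confirmed, the corollary follows with no further computation.
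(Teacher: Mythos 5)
Your proof is correct and is essentially the paper's own argument: the paper likewise obtains the corollary by supplying, for every prime $\ell \neq \text{char}(k)$, an étale $\ell$-adic integral Fourier transform up to homology and feeding this family into the backward implication of Theorem \ref{teoAlg}. The only difference is the reference for the $\ell$-adic input: the paper cites Corollary \ref{coroAlg} (stated for $\C$, fields finitely generated over $\Q$, and finite fields), whereas you cite Corollary \ref{FourierTran}, which is stated for arbitrary algebraically closed fields and therefore matches the hypothesis $k=\bar{k}$ directly — if anything, yours is the cleaner citation.
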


\begin{proof}
This is a direct consequence of Corollary \ref{coroAlg} and Theorem \ref{teoAlg}.
\end{proof}

\printbibliography[title={Bibliography}]

\info
\end{document}